\newtheorem{lemma}{Lemma}[section]
\newtheorem{theorem}[lemma]{Theorem}
\newtheorem{proposition}[lemma]{Proposition}
\newtheorem{definition}[lemma]{Definition}
\theoremstyle{remark}
\newtheorem{remark}[lemma]{Remark}
\def\beq{\begin{equation}}   \def\eeq{\end{equation}}
\def\bea{\begin{eqnarray}}  \def\eea{\end{eqnarray}}
\newcommand{\pa}{\partial}
\newcommand{\hos}{{\,} }
\newcommand{\cl}{{\rm cl}}
\renewcommand{\bar}{\overline}
\renewcommand{\fA}{\mathsf{A}}
\renewcommand{\fB}{\mathsf{B}}
\renewcommand{\fH}{\mathsf{H}}
\renewcommand{\fK}{\mathsf{K}}
\renewcommand{\fV}{\mathsf{V}}
\newcommand{\sV}{\mathscr{V}}
\newcommand{\sA}{\mathscr{A}}
\renewcommand{\sA}{\mathscr{A}}
\newcommand{\Opw}[1]{{\rm Op}^W\!\left(#1\right)}
\newcommand{\Opaw}[1]{{\rm Op}^{aw}\!\left(#1\right)}
\title{Generic transporters  for the  linear time dependent  quantum  Harmonic oscillator on $\R$}
\author{
A. Maspero\footnote{ International School for Advanced Studies (SISSA), Via Bonomea 265, 34136, Trieste, Italy \newline
 \textit{Email: } \texttt{alberto.maspero@sissa.it}}
}
\numberwithin{equation}{section}
\begin{document}
\maketitle

\begin{abstract}
In this paper we consider the linear, time dependent quantum Harmonic  Schr\"odinger equation
$\im \pa_t u= \frac{1}{2} ( - \pa_x^2 + x^2) u +  V(t, x, D)u$, $x \in \R$, where  $V(t,x,D)$ is classical pseudodifferential operator of order 0, selfadjoint, and  $2\pi$ periodic in time.\\
We give sufficient conditions on the principal symbol of
$V(t,x,D)$  ensuring the existence of  solutions displaying infinite time growth of Sobolev norms.
These conditions are generic in the Fr\'echet space of symbols. 
This  shows  that generic, classical pseudodifferential, $2\pi$-periodic perturbations provoke unstable dynamics.
The proof builds on the results of \cite{Mas21} and it is based on pseudodifferential normal form and  local energy decay estimates. 
These last are proved exploiting Mourre's positive commutator theory.
\end{abstract}

\section{Introduction and main result}
\label{sec:app}
In this paper we study the perturbed   quantum harmonic oscillator on $\R$
\begin{align}
\label{har.osc}
& \im \pa_t u= \frac{1}{2} ( - \pa_x^2 + x^2) u +  V(t, x, D)u  , \quad x \in \R \ . 
\end{align}
 We shall always assume that  $V(t,x,D)$ is a bounded operator, selfadjoint and  $2\pi$-periodic in time.
Our goal is to construct solutions  exhibiting unstable behavior in the form of forward energy cascade.
Precisely,  we shall exhibit solutions of \eqref{har.osc} whose  $\cH^r$-Sobolev norms, $r>0$, grows unbounded in time:
\begin{equation}
\label{limsup}
\limsup_{t \to \infty} \norm{u(t)}_{r}  = +\infty  \ ; 
\end{equation}
 here we denoted, for any $r \in \R$,
\begin{equation}
\label{Hr}
\cH^r:= \{ u \in L^2(\R,\C) \colon \ \  \norm{u}_r:= \|H_0^r \, u \|_{L^2}< \infty \} , \qquad H_0 := \frac{1}{2} ( - \pa_x^2 + x^2) \ .
\end{equation}
Note that, when $V = 0$, the unperturbed evolution $e^{- \im t H_0}$ preserves all norms $\|\cdot \|_r$ for all times and no energy cascade occurs. 
So the  question is whether one can construct an operator $V$ producing  unbounded orbits.

To formalize  this concept we shall say  (following \cite{Mas21}) that  $V(t,x,D)$ is   a {\em transporter}
if \eqref{har.osc} has at least one solution fulfilling \eqref{limsup} for some $r >0$.

In the last few years several  transporters for \eqref{har.osc} were constructed by Delort \cite{del}, Bambusi-Gr\'ebert-M.-Robert \cite{BGMR1}, M. \cite{Mas19}, Faou-Raphael \cite{FaouRaph}, Liang, Zhao and Zhou \cite{LZZ}, M. \cite{Mas21}, Thomann \cite{Thomann},  Luo, Liang and Zhao \cite{LLZ}; we will comment more about these results later on.

For the moment, let us note that all these above are  {\em examples} of transporters, and it is not clear neither how to determine if a given operator $V(t,x,D)$ is a transporter nor what happens for generic operators. 
More precisely, the following  questions are  open and, we believe, of great interest (not only for the linear theory, but also for applications to nonlinear systems):
\begin{itemize}
\item[(Q1)] Given an operator $V(t,x,D)$, can we identify sufficient conditions guaranteeing it to be a transporter?
\item[(Q2)] Are transporters rare or common? In other words, how generic are transporters?
\end{itemize}
In this paper we answer both questions, at least in case   $V(t,x,D)$  belongs to the class of  classical pseudodifferential operators of order 0. 
We identify, for the first time,  explicit, sufficient conditions on the principal symbol of $V(t,x,D)$ which guarantee  the operator to be a transporter. 
We  show that these conditions are fulfilled for {\em generic} symbols, meaning for a set which is open and dense in the Fr\'echet topology of  symbols.\\
As a conclusion, we obtain that  generic, $2\pi$-time periodic, classical pseudodifferential perturbations of order 0 produce unbounded orbits -- a fact which is, in our opinion, somewhat surprising.

The conditions we identify on $V(t,x,D)$ are actually very simple. 
Let $v_0(t,x,\xi)$ be its  principal symbol.
Assume it to be  a positively homogeneous function of degree 0 (see Definition \ref{symbol.ao2}), and denote by
$\la v_0 \ra(x,\xi)$  its {\em resonant average} with respect to the classical flow 
\begin{equation}
\label{flow.ham}
\phi^t(x, \xi) := (x \cos t + \xi \sin t, -x \sin t + \xi \cos t)
\end{equation}
 of the harmonic oscillator $h_0(x,\xi):= \frac12 (x^2 + \xi^2)$, i.e. 
\begin{equation}
\label{res.av0}
\la v_0 \ra(x,\xi) :=  \frac{1}{2\pi}\int_0^{2\pi} v_0(t, \phi^t(x,\xi)) \,  \di t  \ . 
\end{equation}
Our main Theorem \ref{thm:main} shows that, if the Poisson bracket between $\la v_0 \ra$ and $h_0$  does not vanish identically outside the origin, i.e.
\begin{equation}
\label{cond}
\{ \la v_0 \ra, h_0 \} \not \equiv 0 \ \  \ \ \mbox{ in }  x^2  + \xi^2 \geq 1  \ , 
\end{equation}
then  $V(t,x,D)$ is a transporter.
Here we use the convention that 
\begin{equation}
\label{}
\{ f, g \} := \pa_\xi f \cdot \pa_x g - \pa_x f \cdot \pa_\xi g \ . 
\end{equation}
The proof of this result, that we will describe at the end of the section,  builds on the theory developed in \cite{Mas21}, and it is based on a combination of pseudodifferential normal form and a dispersive mechanism in the energy space.
The dispersion is quantitatively described by local energy decay estimates, which in turn are proved exploiting Mourre's theory of positive commutators. 

We remark that in \cite{Mas21} we were already able to 
apply some abstract results to  equation \eqref{har.osc}; however
  we were able only to deal with operators belonging to   the special class of smooth T\"oplitz operators\footnote{pseudodifferential operators whose matrix elements (computed on the basis of the Hermite functions) are  constant on the diagonals and decaying fast enough off diagonal}.
On the contrary, the main improvement of the current paper is to deal with  
generic  classical pseudodifferential operators of order 0. 

Let us now state precisely our results.

\subsection{Main result}

We first define the class of symbols we use.

\begin{definition}\label{symbol.ao2}
$(i)$ A function $f$ is a {\em  symbol of order} $\rho \in\R$ if  $f \in C^\infty(\R^2, \C)$ and 
               $\forall \alpha, \beta \in \N_0$, there exists $C_{\alpha, \beta} >0$ such that
$$
 \vert \partial_x^\alpha \, \partial_\xi^\beta f( x,\xi)\vert \leq C_{\alpha,\beta} \ (1 + x^2 + \xi^2)^{\rho-\frac{\beta + \alpha}{2}}  \ . 
$$
             We will write $f \in S^\rho_{\hos}$.\\
$(ii)$ We shall say  that $f$ is a {\em classical symbol of order} 0,  $ f\in S^0_\cl$,  if there exists  $f_0 \in C^\infty(\R^2, \C)$  positively homogeneous of degree $0$, i.e. 
\begin{equation}
\label{hom.fun}
f_0( \lambda x , \lambda \xi) =  f_0(x, \xi) \ , \quad \forall \lambda \geq 1 , \ \ \  \forall \, x^2+\xi^2  \geq 1 ,
\end{equation}
and $\mu <0$ 
such that $f - f_0 \in S^{\mu}$. We shall call $f_0$ the {\em principal symbol} of $f$.
\end{definition}

\begin{remark}
$(i)$ It is easy to see that  $S^0_\cl \subset S^0$. \\
$(ii)$ With our numerology, the symbol $\frac12({x^2 + \xi^2})$ of the harmonic oscillator $H_0$ is of order 1,  and not of order 2 as typically in the literature.
\end{remark}

We shall also consider symbols depending periodically from time. 
We will denote by $C^k(\T, S^\rho)$, $k \in \N_0$, the space of $C^k$ maps $f\colon \T \ni t \mapsto f(t,\cdot) \in S^\rho$ with finite seminorms 
\begin{equation}
\label{seminorms}
\wp^{k,\rho}_j(f) := \sum_{\alpha + \beta \leq j \atop 0 \leq \ell \leq k } \ \ 
\sup_{x,\xi \in \R \atop t \in \T}
\frac{\left|\partial_x^\alpha \, \partial_\xi^\beta  \pa_t^\ell f(t, x,\xi)\right|}{ \left(1 + x^2 + \xi^2\right)^{\rho-\frac{\beta +\alpha}{2}} } \ , 
\qquad \forall j \in \N_0 \  . 
\end{equation}
Such seminorms turn $C^k(\T, S^\rho_\hos)$ into a Fr\'echet space, with  distance 
\begin{equation}
\label{dist}
\td^{k,\rho}(f, g):= \sum_{j \geq 0} \frac{1}{2^j}\,  \frac{\wp^{k,\rho}_j(f-g) }{1+\wp^{k,\rho}_j(f-g)} , \qquad \forall f, g \in C^k(\T, S^\rho_\hos) \ . 
\end{equation}
 Similarly we define the space $C^k(\T, S^0_\cl)$, which we endow with the  seminorms and distance in \eqref{seminorms}, \eqref{dist}.
   Finally we denote by
   $C^0_r(\T, S^0_\cl)$ the subset of $C^0(\T, S^0_{\cl})$ of real valued symbols.\\
 To a symbol $f \in C^k(\T,S^\rho)$ we associate the operator
$F(t,x, D)$  by standard Weyl quantization 
       $$
\big(F(t,x,D) \psi\big)(x):=   \Big(\Opw{f(t,\cdot)} \psi \Big)(x) := 
\frac{1}{2\pi} \iint_{y, \xi \in \R} {\rm e}^{\im (x-y)\xi} \, f\left(t, \frac{x+y}{2}, \xi \right) \, \psi(y) \, \di y \di \xi \ . 
   $$
   We shall say that an operator $F$ is a {\em pseudodifferential operator} of order $\rho$ if $F = \Opw{f}$ for some $f \in S^\rho$ and shall write $F \in \cS^\rho$. 
   If the symbol $f \in C^k(\T, S^\rho)$ we shall write $F \in C^k(\T, \cS^\rho)$.
   
\vspace{1em}

Our main result is the following one.

\begin{theorem}
\label{thm:main}
Denote by
\begin{equation}
  \label{sV}
  \begin{aligned}
  \sV:= \Big\lbrace 
  v \in C^0_r(\T, S^0_\cl) \colon  & \mbox{  the principal symbol } v_0 \mbox{ fulfills }   \{ \la v_0 \ra , \, h_0 \} \not\equiv 0  \mbox{ in } x^2 + \xi^2 \geq 1
  \Big\rbrace \   \ , 
  \end{aligned}
  \end{equation}
  where $\la v_0 \ra$ is the resonant average \eqref{res.av0} of $v_0$ and $h_0(x,\xi) = \frac12(x^2+\xi^2)$.
  Then:
\begin{itemize}
\item[(i)] For any  $v \in \sV$, the operator $V(t,x,D):=\Opw{v}$ is a  transporter for \eqref{har.osc}.
 More precisely, $\forall r > 0$  there exist  a
solution $\psi(t) \in \cH^r$ of \eqref{har.osc}
 and constants $C, T >0$ such that 
\begin{equation}
\label{thm:est}
\norm{\psi(t)}_r \geq C \la t \ra^r , \quad \forall t > T .
\end{equation}
\item[(ii)] The set $\sV$ is {\em generic} in $C^0_r(\T, S^0_\cl)$; precisely it is open and dense with respect to the metric $\td^{0,0}$ in \eqref{dist}.
\end{itemize}
\end{theorem}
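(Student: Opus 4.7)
\emph{Part (i), step 1 -- normal form.} Following the strategy of \cite{Mas21}, the first step is to reduce \eqref{har.osc} to a time-independent effective dynamics via a pseudodifferential normal form in the interaction picture. Setting $u(t) = e^{-\im t H_0} w(t)$ transforms \eqref{har.osc} into $\im \pa_t w = V^I(t)\, w$, where $V^I(t) := e^{\im t H_0} V(t,x,D)\, e^{-\im t H_0}$; the exact Egorov theorem for the quadratic flow $\phi^t$ of \eqref{flow.ham}, which is $2\pi$-periodic, identifies the principal symbol of $V^I(t)$ with $v_0(t,\phi^t(x,\xi))$. The cohomological equation $\pa_t b = v_0(t, \phi^t(\cdot)) - \la v_0\ra$ is then directly solvable by integration (its right-hand side has zero time-mean by definition of the resonant average), producing $b \in C^1(\T, S^0_\cl)$; conjugating by $e^{-\im \Opw{b(t)}}$ removes the oscillatory part of the perturbation and reduces the effective generator to $\Opw{\la v_0\ra}$ modulo a pseudodifferential remainder $R \in \cS^\mu$, $\mu < 0$. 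Back in the Schr\"odinger picture, \eqref{har.osc} is equivalent, up to a bounded invertible conjugation, to $\im \pa_t \tilde\psi = H_{\rm eff}\tilde\psi$ with $H_{\rm eff} := H_0 + \Opw{\la v_0\ra} + R$.

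\emph{Part (i), step 2 -- local energy decay and growth.} The second step exploits the hypothesis $\{\la v_0\ra, h_0\} \not\equiv 0$ on $\{x^2+\xi^2 \ge 1\}$ to produce a Mourre-type positive commutator estimate for $H_{\rm eff}$. Since $\la v_0\ra$ is homogeneous of degree $0$, its Hamiltonian vector field is transverse to the circles $\{h_0 = \text{const}\}$ on a conical open region of $\{x^2+\xi^2\ge 1\}$. Selecting a symbol $a$ adapted to this direction so that $\{\la v_0\ra, a\}$ is coercive on that region (and $\{h_0, a\}$ is non-negative), and setting $A := \Opw{a}$, the commutator satisfies $\im [H_{\rm eff}, A] = \Opw{\{h_0, a\} + \{\la v_0\ra, a\}} + \text{lower order}$ and can be made positive modulo compact errors on a high-energy spectral window $I$. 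The resulting Mourre inequality yields the local energy decay estimate which is exactly the input of the abstract transport theorem of \cite{Mas21}; its conclusion delivers the lower bound $\|\psi(t)\|_r \geq C\la t\ra^r$ for every $r > 0$.

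\emph{Part (ii).} For openness, passing to the limit $\lambda\to\infty$ in the symbol inequalities gives
\[
 \bigl|\pa^\alpha v_0(t,x,\xi) - \pa^\alpha \tilde v_0(t,x,\xi)\bigr| \,\le\, \wp^{0,0}_{|\alpha|}(v - \tilde v) \, (x^2+\xi^2)^{-|\alpha|/2}
\]
for every multi-index $\alpha$ and every $(x,\xi)$ with $x^2+\xi^2 \ge 1$, so the assignment $v \mapsto \{\la v_0\ra, h_0\}$ is continuous from $(C^0_r(\T, S^0_\cl), \td^{0,0})$ to the continuous functions on $\{x^2+\xi^2\ge 1\}$; since being non-zero at some point is an open condition, $\sV$ is open. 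For density, if $v \notin \sV$ then $\{\la v_0\ra, h_0\} \equiv 0$ on $\{x^2+\xi^2\ge 1\}$, hence $\la v_0\ra$ is $\phi^t$-invariant on each circle of radius $\ge 1$; combined with degree-$0$ homogeneity this forces $\la v_0\ra$ to be constant on $\{x^2+\xi^2\ge 1\}$. Pick any real $w_0 \in C^\infty(\R^2)$, homogeneous of degree $0$ outside the unit disk (e.g.\ $w_0(x,\xi) = (x^2-\xi^2)/(x^2+\xi^2)$ smoothly extended inside), and set $w(t,x,\xi) := w_0(\phi^{-t}(x,\xi))$; a direct computation gives $\la w\ra = w_0$, so that $\{\la v_0 \ra + \varepsilon w_0, h_0\} = \varepsilon \{w_0, h_0\} \not\equiv 0$ on $\{x^2+\xi^2\ge 1\}$, whence $v + \varepsilon w \in \sV$ for every $\varepsilon > 0$. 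By dominated convergence in the series defining $\td^{0,0}$, $\td^{0,0}(v, v+\varepsilon w)\to 0$ as $\varepsilon \to 0^+$, establishing density.

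\emph{Main obstacle.} The crux is step 2 of part (i): the harmonic oscillator has purely discrete spectrum, so one needs the variant of Mourre theory adapted to the angular dynamics developed in \cite{Mas21}; moreover the conjugate operator $A$ has to be built so that its commutator with $H_{\rm eff}$ is coercive on an entire high-energy spectral window starting from only the pointwise non-vanishing of $\{\la v_0\ra, h_0\}$ -- this will require a careful geometric analysis of the level sets of $\la v_0\ra$ along the circles $\{h_0 = E\}$ for $E$ large, and a quantitative control of how the pseudodifferential remainder $R \in \cS^\mu$ enters as a relatively compact perturbation.
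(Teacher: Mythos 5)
Your part (ii) is essentially correct and in places cleaner than the paper's own argument: for openness you bound the difference of principal symbols by seminorms of the full difference through the limit characterization $\pa^\alpha v_0(t,x,\xi)=\lim_{\lambda\to\infty}\lambda^{|\alpha|}\pa^\alpha v(t,\lambda x,\lambda\xi)$ (the paper instead evaluates at a single far-away point to kill the lower-order parts), and for density your choice $w(t,\cdot)=w_0\circ\phi^{-t}$, whose resonant average is exactly $w_0$, plays the same role as the paper's explicit $\cos(2t)\,\eta\, x\xi/(x^2+\xi^2)$. Both routes work.

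Part (i), however, has two genuine gaps. First, a single normal-form step does not yield a time-independent effective Hamiltonian: after solving one cohomological equation the remainder is still \emph{time-dependent}, merely of some fixed negative order $\mu<0$, and cannot be absorbed into an autonomous $H_{\rm eff}$. The paper iterates the reduction $N$ times, arriving at $\Opw{\la v_0\ra}+T_N+R_N(t)$ with $T_N$ time-independent of order $-\delta$ and $R_N(t)$ time-dependent of order $-N-\delta$, and then removes $R_N(t)$ by a separate Duhamel argument: a solution of the full equation is built as $\vf(t)+\im\int_t^{+\infty}U_N(t,s)R_N(s)\vf(s)\,\di s$, where $\vf$ is the dispersive solution of the effective equation, and the propagator bound $\norm{U_N(t,s)}_{\cL(\cH^r)}\lesssim\la t-s\ra^{r}$ of \cite{MaRo} together with $N\gg r$ makes the correction decay. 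None of this appears in your proposal, and with a remainder of fixed order the argument would not close. Relatedly, the claim that back in the Schr\"odinger picture one gets the autonomous $H_{\rm eff}=H_0+\Opw{\la v_0\ra}+R$ is wrong: $\Opw{\la v_0\ra}$ does not commute with $H_0$, so undoing the $2\pi$-periodic gauge reintroduces time dependence; moreover $H_0$ plus a bounded operator has compact resolvent, hence purely discrete spectrum, so no nontrivial local energy decay can hold for it. The point of the resonance is to stay in the rotating frame, where the effective operator is the \emph{bounded} operator $\Opw{\la v_0\ra}+T_N$, whose essential spectrum contains an interval.

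Second, the heart of the proof, namely the construction of the conjugate operator $A$ and the derivation of a strict Mourre estimate from the mere non-vanishing of $\{\la v_0\ra,h_0\}$, is not carried out (you flag it as the main obstacle), and your sketch points in the wrong direction: the hypothesis does not give coercivity on a conical region, and the relevant localization is not a high-energy window of $H_{\rm eff}$ but an interval in the spectrum of the order-zero operator $\fV_0=\Opw{\la v_0\ra}$. The paper takes $A=\Opw{\{\la v_0\ra,h_0\}\,h_0}$, for which homogeneity makes $\{\la v_0\ra,a\}=\{\la v_0\ra,h_0\}^2$ up to a compactly supported symbol, so the localized commutator has principal symbol $g_\cI(\la v_0\ra)^2\{\la v_0\ra,h_0\}^2$; Sard's theorem then selects $\cI$ inside the regular values of $\la v_0\ra$ restricted to the unit circle, making this symbol bounded below by $\rho\,g_\cI(\la v_0\ra)^2$ at infinity, and one concludes with the strong G\r{a}rding inequality, compact errors being removed by the standard shrinking argument. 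Without this (or an equivalent) construction, step 2 of your part (i) remains an unproven claim.
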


Let us  comment the result.
\begin{enumerate}
\item 
The set  $\sV$ is strictly contained in $C^0_r(\T, S^0_\cl)$. For example, any symbol constant in time does not belong to $\sV$. Indeed if $v_0(x,\xi)$ is time independent, then its resonant average $\la v_0 \ra$  commutes with  $h_0$, as it is easily checked.\\
However,  given a time independent, non constant  symbol $v_0(x,\xi) \in S^0_{\cl}$, it is always possible to find $n \in \N$ so that 
$\cos(nt) v_0(x,\xi) \in \sV$, see Lemma \ref{lem:ex1}.

\item 
The open property of item $(ii)$ guarantees that transporters are stable under perturbations. 
In particular if $v \in \sV$, any  sufficiently small perturbation of $v$ still belongs to $\sV$.\\
The density instead  guarantees that given {\em any} symbol in $C^0_r(\T, S^0_\cl)$,   it is always possible to perturb it  so that the new symbol   belongs to $\sV$.

\item The property of belonging to $\sV$  involves only the principal symbol. In particular if $v \in \sV$, one can add arbitrarily large symbols in $C^0_r(\T, S^\rho)$, $\rho <0$, and still be in $\sV$.

 \item The growth of Sobolev norms of Theorem \ref{thm:main}  is truly an energy cascade  phenomenon;  indeed the $L^2$-norm of any solution of 
 \eqref{har.osc}  is preserved for all times, $\norm{u(t)}_{L^2}  = \norm{u(0)}_{L^2}$, $\, \forall t \in \R$. This is  due to the fact that $H_0 + V(t,x,D)$ is selfadjoint.

\item
Estimate \eqref{thm:est} is optimal, since it is proved in \cite{del} (see also \cite{MaRo}) that  {\em any} solution of \eqref{har.osc}  fulfills the upper bounds
$$
\forall r>0 \ \ \ \exists \,\wt  C_r > 0 \colon 
\quad 
\norm{\psi(t)}_r \leq \wt C_r \la t \ra^r \, \norm{\psi(0)}_r .
$$

\item Energy cascade is  a resonant phenomenon; here it happens because $V(t,x,D)$ oscillates at frequency $\omega = 1$  which resonates with the spectral gaps of the harmonic oscillator.  
In \cite{BGMR2} we proved  that if   $V(\omega t, x, D)$  is quasiperiodic in time  with a Diophantine frequency vector $\omega \in \R^n$, then the Sobolev norms of the solutions grow at most as  $\la t \ra^\epsilon$, $\forall \epsilon >0$ (see \cite{bambusi_langella} for recent results on $\la t\ra^\e$ growth and references therein). \\
Moreover, with additional restrictions on  $\omega$ (typically  belonging to some  Cantor set of large measure) and assuming $V(t,x,D)$ to be small in size, then all  solutions have uniformly in time bounded Sobolev norms \cite{Bam16I,BGMR1}. 
Therefore the stability/instability of the system depends strongly on the resonant properties of the frequency $\omega$.

\end{enumerate}

Let us briefly describe the main ingredients of the proof. 
The first step is to use resonant pseudodifferential normal form, analogous to the one of Delort \cite{del}, to conjugate the original equation \eqref{har.osc} to 
\begin{equation}
\label{intro.eff0}
\im \pa_t \vf = \big(\Opw{\la v_0\ra} + T + R(t) \big)\vf
\end{equation}
where $\la v_0\ra$ is the resonant average \eqref{res.av0} of the principal symbol of $V(t,x,D)$, $T$ is a selfadjoint, time independent pseudodifferential operator of negative order, and $R(t)$ is an arbitrary regularizing perturbation. This is done in Section \ref{sec:normalform}.

The second step is the analysis of the 
{\em effective} Hamiltonian
\begin{equation}
\label{intro.eff}
\im \pa_t \psi = \big(\Opw{\la v_0\ra} + T  \big)\psi
\end{equation}
obtained removing $R(t)$ from \eqref{intro.eff0}.
We construct solutions of \eqref{intro.eff} exhibiting dispersion in the energy space, i.e. solutions $\psi(t)$ whose negative $\cH^{-r}$-Sobolev norm, $r >0$,   decays in time at a polynomial rate:
\begin{equation}
\label{intro.decay}
\norm{\psi(t)}_{-r} \leq \frac{C}{\la t \ra^r} \norm{\psi(0)}_r , \quad \forall t \in \R \  .
\end{equation}
Hence, by the unitarity of the flow, these solutions have unbounded growth of positive Sobolev norms.
Then it is not difficult to construct solutions of the complete system \eqref{intro.eff0} exhibiting Sobolev norms explosion. 

To prove \eqref{intro.decay}  we study the spectral properties of $\fV_0:= \Opw{\la v_0\ra}$. 
Exploiting Mourre's commutator theory \cite{Mourre}, 
we show that 
 $\fV_0$ has absolutely continuous spectrum in an interval  $\cI \subset \R$,  over which it fulfills the strict Mourre estimate 
 \begin{equation}\label{intr.mourre}
g_\cI(\fV_0) \, \im [\fV_0, A ] \, g_\cI(\fV_0) \geq \rho \, g_\cI(\fV_0)^2
\end{equation}
for some $\rho >0$, $A \in \cS^1$ and  $g \in C^\infty_c(\R, \R_{\geq 0})$ with  $g \equiv 1$ on $\cI$.
This in turn implies, by Sigal-Soffer theory \cite{SS},  that
$\fV_0$   fulfills   
dispersive estimates in the frequency space  in the form of  local energy decay estimates.
The same is true also for  $\fV_0 + T$, as Mourre estimates are stable by  pseudodifferential operators of negative order.

The key difficulty in applying Mourre-Sigal-Soffer theory is that the operator $A$ entering the  Mourre estimate \eqref{intr.mourre} is not given, 
but  has to be constructed. 
In particular we produce $A$ so that the principal symbol of the operator on the left of \eqref{intr.mourre} is 
\begin{equation}
\label{intr.mourre2}
g_\cI(\la v_0\ra)^2  \,  \{ \la v_0 \ra, h_0 \}^2  \ .
\end{equation}
Then condition  \eqref{cond} allows to select  an interval  $\cI$ so that the function in  \eqref{intr.mourre2} is strictly positive, and then we deduce
\eqref{intr.mourre} exploiting the strong G\r{a}rding inequality.
 This is done in Section \ref{sec:mourre}. 

Let us compare our result with the previous ones in the literature.
After the pioneering work by Bourgain \cite{bourgain99}, 
Delort \cite{del} constructs the first example of a transporter for \eqref{har.osc}, which is also a classical pseudodifferential operator of order 0, and a solution whose $\cH^r$-norm grows as $t^r$.
 Bambusi-Gr\'ebert-M.-Robert \cite{BGMR1} constructs an unbounded transporter and a solution growing as $t^{2r}$. 
 The author \cite{Mas19} constructs a universal transporter, meaning a perturbation $V(t,x,D)$ such that {\em all} non trivial solutions of \eqref{har.osc} fulfill \eqref{limsup}.
 Faou-Raphael \cite{FaouRaph} deal with the very interesting case of a multiplication operator  $V(t,x)$ (and not a pseudodifferential operator) and construct a solution whose $\cH^r$-norm grows at a logarithmic speed.
 Thomann \cite{Thomann} constructs a transporter for the 2D Harmonic oscillator on the Bargmann-Fock space. 
 Finally Liang, Zhao and Zhou \cite{LZZ} and   Luo, Liang and Zhao \cite{LLZ} consider operators $V(\omega t,x,D)$ which are the quantization of polynomial symbols of order at most 2 and depend quasi-periodically in time with a frequency $\omega \in \R^d$. 
 They are able to completely describe the  quantum dynamics according to the  resonant properties of $\omega$, and even obtain solutions growing at the unusual speed\footnote{note that in \cite{LLZ} the space $\cH^s := D(H_0^{s/2})$, differently from \eqref{Hr}, so in our notation one has to substitute $s \leadsto 2s$ in \cite{LLZ}} of $t^{4s}$.
 Finally we mention Haus-M. \cite{HausMaspero} which considers   anharmonic oscillators.

Before closing this introduction, we mention that  constructing  solutions with unbounded orbits in nonlinear Schr\"odinger-like equations is very difficult.
Long time unstable orbits  have been constructed for the  
nonlinear Schr\"odinger equation on $\T^2$  \cite{CKSTT, guardia_kaloshin,
hani14,  haus_procesi15, guardia_haus_procesi16, GHMMZ, GG}, 
but truly  unbounded  orbits  are known only 
 for the cubic 
Szeg\H o equation on $\T$  \cite{gerard_grellier, gerard_grellier2} and 
the  cubic NLS  on $\R \times \T^2$ \cite{hani15}.

\section{Pseudodifferential operators}
In this section we collect some results about pseudodifferential operators in $\cS^\rho$.

\paragraph{Symbolic calculus.} This first class of results regards very basic properties of pseudodifferential operators and can be found in classical texts such as \cite{ho,Shubin}.

\begin{theorem}\label{thm:sym.cal}
 Let $a \in S^\rho_\hos$, $b \in S^\mu$, $\rho, \mu \in \R$. Then
\begin{itemize}
\item[(i)] {\em Action:} For any $s \in \R$, there are $C, M >0$ such that
$$
\norm{\Opw{a} u }_{s-\rho} \leq C_s \, \wp^\rho_{M}(a)	\, \norm{u}_s . 
$$
\item[(ii)] {\em Symbolic calculus:}  
One has $\Opw{a}^* = \Opw{\bar a}$.\\
There exists a symbol $c \in S^{\rho+\mu}$ such that $\Opw{a}\circ \Opw{b} = \Opw{c}$. Moreover $c - ab  \in S^{\rho+\mu-1}$ and 
$$
\forall j \in \N_0 , \qquad \exists N, C >0  \mbox{ s.t. }  \ \ 
\wp^{\rho+\mu-1}_j(c - ab ) \leq C \, \wp^{\rho}_N(a) \, \wp^{\mu}_N(b) \ . 
$$ 
 There exists a symbol $d \in S^{\rho+\mu-1}$ such that 
 $\im [\Opw{a},  \Opw{b}] = \Opw{d}$. 
 Moreover $  d -  \{a , b \} \in S^{\rho + \mu -3} $ and 
  $$
\forall j \in \N_0 , \qquad \exists N, C >0  \mbox{ s.t. }  \ \ 
\wp^{\rho+\mu-3}_j( d -  \{a , b \} ) \leq C \, \wp^{\rho}_N(a) \, \wp^{\mu}_N(b) \ . 
$$ 
\item[(iii)]{\em Exact Egorov theorem:} One has
$$
e^{\im \tau H_0}\, \Opw{a} \, e^{-\im \tau H_0}
= \Opw{a\circ \phi^\tau} \ ,\quad \forall \tau \in \R  
$$
where $\phi^\tau$ is the flow \eqref{flow.ham}.
In particular $t \mapsto a\circ \phi^t \in  C^k(\T, S^\rho)$, $\forall k \in \N_0$,  and
  \begin{equation}
  \label{est.4}
  \forall  j\in \N_0  \quad \exists N, C >0  \ s.t.\  \   \wp^{\rho}_j(a\circ \phi^t) \leq C \, \wp^\rho_N(a)  \ . 
  \end{equation}
\item[(iv)] {\em Compactness:} Let $\rho <0$. Then $\Opw{a} $ is compact. 
\end{itemize}
\end{theorem}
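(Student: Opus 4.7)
My plan is to prove the four items in turn; item (ii) is the hinge from which (i), (iii) and (iv) then follow, so I would dispatch it first. Start from the Weyl composition formula
\begin{equation*}
(a \# b)(z) = \frac{1}{\pi^2}\iint_{\R^2\times \R^2} e^{-2\im \sigma(u,v)}\, a(z+u)\, b(z+v)\, \di u\, \di v, \qquad z=(x,\xi),
\end{equation*}
with $\sigma$ the standard symplectic form on $\R^2$. Taylor-expanding the phase and integrating by parts with Gaussian weights in $(u,v)$ produces the asymptotic $a \# b = ab + r_1$ with $r_1 \in S^{\rho+\mu-1}$; the antisymmetry of $\sigma$ forces the $(\rho+\mu-2)$-order contribution to $a\#b - b\#a$ to vanish, whence $\im(a\#b-b\#a) = \{a,b\} + r_2$ with $r_2 \in S^{\rho+\mu-3}$. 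Differentiating under the integral sign and bounding the derivatives of $a$ and $b$ by finitely many of their $S^\rho$, $S^\mu$ seminorms yields the explicit continuity estimates on the remainders. The identity $\Opw{a}^* = \Opw{\bar a}$ is immediate from the symmetry of the Weyl kernel in its two spatial variables.

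For (i), the base case $s=\rho=0$ is the Calder\'on-Vaillancourt-type $L^2$-boundedness for $a \in S^0_\hos$, which I would prove by a dyadic partition of $\R^2$ adapted to the weight $1+x^2+\xi^2$ combined with a Cotlar-Stein almost-orthogonality argument; only finitely many seminorms of $a$ enter. For the general case I exploit the ellipticity of the symbol $h_0 \in S^1_\hos$ of $H_0$ to construct, via (ii), parametrices $\Opw{q_r}\in \cS^r$ for $H_0^r$. Then $H_0^{s-\rho}\Opw{a} H_0^{-s} = \Opw{c}$ with $c \in S^0$ and $\wp^0_M(c) \lesssim \wp^\rho_{M'}(a)$, and sandwiching with $H_0^{-(s-\rho)}$ and $H_0^s$ delivers the claimed bound in the $\cH^s \to \cH^{s-\rho}$ norm.

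Part (iii) is the exact Egorov theorem for a quadratic Hamiltonian. The flow $\phi^\tau$ is a rotation of $\R^2$, hence a linear symplectomorphism, and $e^{-\im \tau H_0}$ is the corresponding metaplectic operator; for Weyl quantization under linear symplectic transformations the conjugation identity $e^{\im \tau H_0}\Opw{a}e^{-\im \tau H_0} = \Opw{a\circ \phi^\tau}$ holds with no asymptotic remainder. One verifies it directly by a change of variables in the oscillatory integral, using the Mehler explicit kernel of $e^{-\im \tau H_0}$. The bound \eqref{est.4} follows because $|\phi^\tau(x,\xi)|^2 = x^2+\xi^2$ and the chain rule expresses $\partial^\gamma(a\circ \phi^\tau)$ as a finite linear combination of $(\partial^\delta a)\circ \phi^\tau$ with $|\delta|=|\gamma|$, whose coefficients are polynomials in $\cos\tau,\sin\tau$ bounded by $1$ uniformly in $\tau\in \T$; smoothness in $\tau$ is inherited from that of $\phi^\tau$.

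For (iv), item (i) with $s=0$ and $\rho<0$ gives $\Opw{a}\colon L^2 \to \cH^{-\rho}$ bounded; since $H_0$ has compact resolvent (Hermite basis with eigenvalues $n+\tfrac12\to\infty$), the inclusion $\cH^{-\rho}\hookrightarrow L^2$ is compact, so $\Opw{a}$ is compact on $L^2$. The principal technical difficulty throughout is the bookkeeping in (ii) — tracking exactly which finitely many $\wp^\rho_N(a)$ and $\wp^\mu_N(b)$ dominate the seminorms of $r_1$ and $r_2$ — but this is mechanical and follows well-trodden paths in the oscillatory-integral calculus of \cite{ho,Shubin}.
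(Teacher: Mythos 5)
The paper gives no proof of this theorem at all --- it is quoted as classical material from \cite{ho,Shubin} --- and your sketch reproduces precisely the standard arguments of those references (Moyal expansion with remainder bounds in the weighted classes for (ii), Calder\'on--Vaillancourt plus conjugation by powers of $H_0$ for (i), metaplectic covariance/Mehler kernel for the exact Egorov theorem, and the compact embedding $\cH^{-\rho}\hookrightarrow L^2$ for (iv)), so it is correct in approach and substance. The one step you state loosely is treating $H_0^{s}$ for real $s$ as an operator in $\cS^{s}$: an elliptic parametrix by itself does not yield fractional powers, and one needs complex powers or the Helffer--Sj\"ostrand functional calculus to place $H_0^{s}$ in the calculus with seminorm control, but this is classical and does not affect the argument.
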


\begin{remark}\label{rem:res.av}
If $v \in C^0(\T, S^\rho)$, $\rho \in \R$, then its resonant average $\la v \ra(x,\xi)$ (defined in \eqref{res.av0}) belongs to $S^\rho$, as one checks using the explicit expression
\begin{equation}\label{symb}
\la v \ra(x,\xi) := 
\frac{1}{2\pi} \int_0^{2\pi}v(t,x \cos t + \xi \sin t, -x \sin t + \xi \cos t) \, \di t  \ . 
\end{equation}
Note also that  $\la v \ra$ is time-independent.
If $v$ is real valued, so is $\la v \ra$.
\end{remark}

\paragraph{Flows.} The second class of results regards the flow generated by  pseudodifferential operators.  For the proofs we refer to  \cite{BGMR2}.
\begin{lemma}
\label{MR}
 Assume that $X \in  C^1(\T, \cS^1)$ is selfadjoint.
 Then the following holds true.
 \begin{itemize}
 \item[(i)] {\em Flow:} $e^{- \im \tau X(t)}$ extends to an operator in $\cL(\cH^r)$   $\, \forall r \in \R$, and moreover there exist $c_r, C_r >0$ s.t.
 \begin{equation}
 \label{X(t).est0}
 c_r \norm{\psi}_r \leq \norm{e^{-\im \tau X(t)} \psi}_r \leq 
 C_r \norm{\psi}_r \ , \qquad \forall t \in \R \ , \quad \forall \tau \in [0,1]  \ .
 \end{equation}
 \item[(ii)] {\em Conjugation:} Let $H(t)$ be a time dependent selfadjoint operator. Assume that
$\psi(t)=e^{-\im X(t)}\vf(t)$ then
\begin{equation}
\label{1}
\im\pa_t \psi=H(t) \psi\ \quad \iff \quad \im\pa_t \vf  =  H^+(t) \vf 
\end{equation}  
where 
\begin{align}
\label{4.1.1}
 H^+(t):= e^{\im X(t)} \, H(t)\, e^{-\im X(t)}
-\int_0^1 e^{\im s X(t) } \,(\pa_t X(t))  \, e^{-\im
  s X(t) }  \ \di s \ .
\end{align}
\item[(iii)]  {\em Lie expansion:} Let $X  \in  \cS^\rho$ with  $\rho <1$ and  $H\in  \cS^{m}$, 
$m \in \R$, both selfadjoint. 
Then $e^{\im X} \, H\, e^{-\im  X} \in \cS^m$, it is selfadjoint and for any $M\geq 1$ we have
\begin{equation}
\label{expansion}
e^{\im  X}\, H \,  e^{-\im X } = \sum_{\ell=0}^{M}\frac{\im^\ell}{ \ell!}{\rm ad}_{X}^\ell(H) + R_M ,
\end{equation}
where ${\rm ad}_{X}(H):= [X,H]$ and 
$$
R_M :=  \frac{\im^{M+1}}{M!} \int_0^1 (1-\tau)^M
\, e^{\im \tau X}\, {\rm ad}_{X}^{M+1}(H) \,  e^{-\im \tau X }  \, \di \tau  \  \in \cS^{{m -(M+1)(1-\rho)}}
$$
is selfadjoint.
\end{itemize}
\end{lemma}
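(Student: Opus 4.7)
For part (i), since $X(t) \in \cS^1$ is selfadjoint, Stone's theorem yields a unitary group $e^{-\im \tau X(t)}$ on $L^2$ for each $t$. To promote this to the $\cH^r$ bound \eqref{X(t).est0}, I would run a commutator energy estimate. Set $\vf_\tau := e^{-\im \tau X(t)}\psi$ and differentiate:
\begin{equation}
\frac{d}{d\tau}\|H_0^r \vf_\tau\|_{L^2}^2 = \la H_0^r \vf_\tau,\ \im\, [X(t), H_0^r]\, H_0^{-r}\cdot H_0^r \vf_\tau \ra + {\rm c.c.},
\end{equation}
where the selfadjoint contribution $X(t) H_0^r$ cancels by symmetry. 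Since $H_0^r$ is elliptic of order $r$ (its symbol lies in $S^r$), Theorem \ref{thm:sym.cal}(ii) gives $[X(t), H_0^r] \in \cS^{1+r-1} = \cS^r$, hence $[X(t), H_0^r] H_0^{-r} \in \cS^0$ is $L^2$-bounded with norm controlled by $\wp^{1,1}_N(X)$ for some $N$. Gronwall delivers the upper bound; running the same argument for the inverse flow $e^{+\im\tau X(t)}$ yields the lower bound, uniformly for $t \in \R$ and $\tau \in [0,1]$.

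For part (ii), this is a Duhamel calculation. The key identity
\begin{equation}
\pa_t e^{-\im X(t)} = -\im \int_0^1 e^{-\im(1-s)X(t)}\, (\pa_t X(t))\, e^{-\im s X(t)} \, ds
\end{equation}
is obtained by differentiating $s \mapsto e^{-\im(1-s)X(t+h)} e^{-\im s X(t)}$ from $s=0$ to $s=1$, integrating, and letting $h\to 0$ using the uniform bounds from (i) to justify the limit. Inserting $\psi = e^{-\im X(t)}\vf$ into $\im \pa_t \psi = H(t)\psi$, applying $e^{\im X(t)}$ on the left, and conjugating the integral identity by $e^{\im X(t)}$ yields \eqref{1} and exactly the expression \eqref{4.1.1} for $H^+(t)$.

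For part (iii), I would set $F(\tau) := e^{\im \tau X}\, H\, e^{-\im \tau X}$ for $\tau \in [0,1]$. A one-line induction gives $F^{(\ell)}(\tau) = \im^\ell\, e^{\im \tau X}\, {\rm ad}_X^\ell(H)\, e^{-\im \tau X}$, and in particular $F^{(\ell)}(0) = \im^\ell\, {\rm ad}_X^\ell(H)$. Taylor's theorem with integral remainder then delivers \eqref{expansion} with precisely the stated $R_M$. For the order count, Theorem \ref{thm:sym.cal}(ii) gives $[\cdot,X]\colon \cS^m \to \cS^{m+\rho-1}$, so iterating $M+1$ times yields ${\rm ad}_X^{M+1}(H) \in \cS^{m-(M+1)(1-\rho)}$. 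Selfadjointness of $R_M$ is immediate once one verifies selfadjointness of each ${\rm ad}_X^\ell(H)$, a trivial induction using $X^* = X$ and $H^* = H$.

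The main obstacle is proving that $R_M$ belongs to $\cS^{m-(M+1)(1-\rho)}$ as a bona fide pseudodifferential operator, rather than merely being $L^2$-bounded. This reduces to an Egorov-type statement: conjugation by $e^{\im \tau X}$ preserves the class $\cS^\bullet$ with quantitative seminorm control. Because $\rho < 1$, the symbol of $X$ is subprincipal with respect to the order grading, so the conjugated symbol can be built by an iterative scheme based on the Heisenberg transport equation $\pa_\tau a_\tau = \{x_\tau, a_\tau\} + {\rm l.o.t.}$, where $x_\tau$ is the symbol of $X$ (here depending on $\tau$ via the unknown conjugation, hence the need for a fixed-point argument in the Fr\'echet space $\cS^m$). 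Each iteration improves the residual by a factor $1-\rho > 0$, so the bootstrap closes. The hypothesis $\rho < 1$ is essential: without it each commutator no longer lowers the order and the expansion ceases to be asymptotic.
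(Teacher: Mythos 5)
The paper gives no proof of Lemma \ref{MR} at all: it is quoted from \cite{BGMR2}, so there is no internal argument to compare against, and your outline is essentially a reconstruction of the proof in that reference (commutator plus Gronwall estimate for the $\cH^r$ bounds in (i), the Duhamel identity for $\pa_t e^{-\im X(t)}$ in (ii), Taylor expansion with integral remainder combined with an Egorov-type conjugation statement in (iii)). Items (i) and (ii) are correct as you present them, up to the usual domain/regularization remarks needed to justify differentiating with unbounded operators, which you acknowledge.

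Two points in (iii) need attention. First, your claim that each ${\rm ad}_X^\ell(H)$ is selfadjoint is false: for selfadjoint $X,H$ one has $[X,H]^*=-[X,H]$, hence $\big({\rm ad}_X^\ell(H)\big)^*=(-1)^\ell\,{\rm ad}_X^\ell(H)$, and it is only the combination $\im^\ell\,{\rm ad}_X^\ell(H)$ that is selfadjoint; this is what makes $R_M$ selfadjoint, thanks to its prefactor $\im^{M+1}$, the real weight $(1-\tau)^M$ and the unitary conjugation. The fix is trivial, but the intermediate statement as written is wrong. Second, the genuinely nontrivial content of (iii) -- that conjugation by $e^{\im\tau X}$ with $X\in\cS^\rho$, $\rho<1$, maps each class $\cS^\mu$ into itself with quantitative seminorm control, which you need both for $e^{\im X}He^{-\im X}\in\cS^m$ and for $R_M\in\cS^{m-(M+1)(1-\rho)}$ as a pseudodifferential operator rather than a merely bounded one -- is only asserted through a sketched transport-equation/fixed-point scheme (``the bootstrap closes''). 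That step is precisely what is proved in \cite{BGMR2}, and as a self-contained argument your write-up has a gap there, although the mechanism you indicate (each commutator with $X$ gains $1-\rho$ in the order, which is why $\rho<1$ is essential) is the correct one.
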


\begin{remark}
If $X\in C^{k}(\T, \cS^\rho)$, $H \in C^k(\T, \cS^m)$ with $\rho <1$ and $m \in \R$, then the remainder $R_M(t)$ in formula \eqref{expansion} belongs to
$C^k(\T, \cS^{{m -(M+1)(1-\rho)}})$.

\end{remark}

\paragraph{Functional calculus.} The next results concern the functional calculus of pseudodifferential operators in $\cS^\rho$. Standard references are the papers 
\cite{HS,davies} and the books \cite{robook, ABG}. 

Given $g\in C^\infty_c(\R, \R)$, we define its {\em almost analytic extension} as follows: for any  $N \in \N$, put	
	\begin{equation}
	\label{}
\wt g_N \colon \R^2 \to \C , \qquad 	\wt g_N(x, y)  :=  \left( \sum_{\ell = 0}^N g^{(\ell)}(x) \frac{(\im y)^\ell}{\ell !} \right) \, \tau \left(\frac{y}{\la x \ra} \right)
	\end{equation}
	where $\tau \in C^\infty(\R, \R_{\geq 0})$ is a  cut-off function fulfilling $\tau(s) = 1$ for $|s| \leq 1$ and $\tau(s) = 0$ for $|s| \geq 2$.
Then, given 	$\fH$ a selfadjoint operator, one  defines  $g(\fH)$ via  the {\em Helffer-Sj\"ostrand formula} 
\begin{equation}
	\label{f(T)}
	g(\fH)  := 
	  - \frac{1}{\pi} \int_{\R^2} \frac{\pa \wt g_N(x,y)}{\pa {\bar z}} \, (x+\im y-\fH)^{-1} \di x \, \di y  \ , 
	  \qquad
	 \frac{\pa }{\pa {\bar z}} := \frac{1}{2}\left(  \frac{\pa }{\pa x}  + \im  \frac{\pa }{\pa y} \right) \ .
	\end{equation}	
The operator $g(\fH)$ is independent of $N \in \N$, see \cite{davies}.

\begin{theorem}[Functional calculus]\label{lem:fc}
 Let $g \in C^\infty_c(\R, \R)$. 
\begin{itemize}
\item[(i)] Let $v \in S^0$ be real valued. The operator $g(\Opw{v})$, defined via \eqref{f(T)}, belongs to  $\cS^0$. 
Moreover 
$ g(\Opw{v})- \Opw{g(v)} \in  \cS^{-1} $. 
\item[(ii)] Let $\fA, \fB \in \cL(\cH)$ be selfadjoint. If  $\fA - \fB$ is compact, so is $g(\fA) - g(\fB)$.
\end{itemize}
\end{theorem}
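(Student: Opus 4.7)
The plan is to use the Helffer--Sj\"ostrand representation \eqref{f(T)} as the sole bridge between both parts of the theorem and reduce everything to $z$-parametric estimates on the resolvent $(z-\Opw{v})^{-1}$ for $z = x+\im y$, $y\neq 0$. Under this reduction, part $(i)$ becomes a pseudodifferential parametrix statement, while part $(ii)$ follows immediately from the second resolvent identity combined with the fact that the compact operators form a norm-closed two-sided ideal in $\cL(\cH)$.

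For part $(i)$, I first build a parametrix. Set $b_0(z;x,\xi) := (z-v(x,\xi))^{-1}$; since $v\in S^0$ is real and bounded, $b_0(z)\in S^0$ with seminorms controlled by $\wp^0_j(b_0(z))\leq C_j \la y^{-1}\ra^{N_j}$. By the Moyal expansion in Theorem \ref{thm:sym.cal}$(ii)$, $\Opw{b_0(z)}(z-\Opw{v}) = \uno + \Opw{c_1(z)}$ with $c_1(z)\in S^{-1}$ and seminorms polynomial in $|y|^{-1}$. Iterating produces an asymptotic series $b(z)\sim \sum_{j\geq 0}b_j(z)$ with $b_j(z)\in S^{-j}$ such that for any fixed $N\geq 1$, $\Opw{b(z)}(z-\Opw{v}) = \uno - \Opw{r_N(z)}$, $r_N(z)\in S^{-N}$, and hence $(z-\Opw{v})^{-1} = \Opw{b(z)} + \Opw{r_N(z)}(z-\Opw{v})^{-1}$. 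Inserting this into \eqref{f(T)}, the leading contribution is a pseudodifferential operator with Weyl symbol $B(x,\xi) := -\pi^{-1}\int_{\R^2}\pa_{\bar z}\wt g_N(z)\, b(z;x,\xi)\,\di x\,\di y$; the almost-analytic bound $|\pa_{\bar z}\wt g_N(z)|\leq C_N|y|^N$ on the support of $\tau(y/\la x\ra)$ makes every integral absolutely convergent once $N$ is large. The $j=0$ term produces $g(v(x,\xi))$ by the scalar Helffer--Sj\"ostrand identity, while the $j\geq 1$ terms give symbols in $S^{-1}$, so $B\in S^0$ and $B-g(v)\in S^{-1}$. The remainder $\int \pa_{\bar z}\wt g_N(z)\, \Opw{r_N(z)}(z-\Opw{v})^{-1}\,\di x\,\di y$ is bounded from $\cH^s$ to $\cH^{s+N-1}$ for every $s$, using $\|(z-\Opw{v})^{-1}\|_{\cH^s\to\cH^s}\leq |y|^{-1}$ together with the action estimate of Theorem \ref{thm:sym.cal}$(i)$; for $N$ arbitrarily large this is a smoothing operator whose Weyl symbol lies in $\bigcap_M S^{-M}$ and is absorbed into $B$ modulo $S^{-1}$.

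For part $(ii)$, I subtract the Helffer--Sj\"ostrand representations of $g(\fA)$ and $g(\fB)$ and invoke the resolvent identity to write $g(\fA)-g(\fB) = -\pi^{-1}\int_{\R^2}\pa_{\bar z}\wt g_N(z)(z-\fA)^{-1}(\fA-\fB)(z-\fB)^{-1}\,\di x\,\di y$. Each integrand is compact, because the compacts form a two-sided ideal in $\cL(\cH)$ and $\fA-\fB$ is compact; its operator norm is dominated by $|y|^{-2}\|\fA-\fB\|_{\cL(\cH)}\,|\pa_{\bar z}\wt g_N(z)|$, which is integrable for $N\geq 2$ thanks to $|\pa_{\bar z}\wt g_N|\leq C_N|y|^N$. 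Since the compact operators are norm-closed, the integral lies in the compact ideal.

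The main obstacle is the uniformity control in the parametrix step: every seminorm bound on $b_j(z)$ and on $r_N(z)$ must be made quantitative in negative powers of $|y|$, so that these estimates match against the almost-analytic decay $|y|^N$ of $\pa_{\bar z}\wt g_N$ for $N$ chosen large enough. Identifying the resulting smoothing remainder as a genuine pseudodifferential operator with Weyl symbol in $\bigcap_M S^{-M}$, rather than merely as a continuous operator, also requires some care, but this is routine in the oscillator symbol class once the $z$-dependence has been fully quantified.
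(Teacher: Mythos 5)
The paper does not prove this theorem itself --- it defers to the standard references \cite{HS,davies,robook} --- and your argument is precisely the standard Helffer--Sj\"ostrand parametrix proof found there (scalar HS identity for the leading symbol $g(v)$, lower-order terms in $S^{-1}$, smoothing remainder, and for $(ii)$ the resolvent identity plus the norm-closed ideal property of compacts), so it is correct and consistent with what the paper relies on. The only inaccuracy is the bound $\|(z-\Opw{v})^{-1}\|_{\cH^s\to\cH^s}\le |y|^{-1}$, which holds as stated only for $s=0$; for $s\neq 0$ one conjugates by $H_0^s$ and uses symbolic calculus to obtain a bound that is merely polynomial in $|y|^{-1}$, which is all your scheme needs since $N$ can be taken arbitrarily large.
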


\paragraph{Strong G\r{a}rding inequality.} The next result  is the strong G\r{a}rding inequality for symbols in $S^0$.

\begin{theorem}[Strong G\r{a}rding inequality] 
\label{thm:garding}
Let $a \in S^0$ and assume there exists $R >0$ such that
\begin{equation}
\label{garding.a.ass}
a(x,\xi) \geq 0 , \quad \forall x^2 + \xi^2 \geq R  \ . 
\end{equation}
Then there exists $C >0$ such that
\begin{equation}
\label{sGarding}
\la \Opw{a} u , u \ra \geq - C \norm{u }_{-1}^2 , \quad \forall u \in L^2 \ .
\end{equation}
\end{theorem}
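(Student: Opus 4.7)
The plan is to reduce to a strictly positive symbol and then run a Weyl square-root construction; the remainder is then controlled by the gain built into pseudodifferential operators of negative order.

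For the reduction, pick a cutoff $\chi\in C^\infty_c(\R^2,\R_{\geq 0})$ with $\chi\equiv 1$ on $\{x^2+\xi^2\leq R\}$ and choose $M>0$ large enough that $\tilde a:=a+M\chi\geq 1$ on all of $\R^2$. Then $\tilde a\in S^0_\hos$ and $\chi\in S^{-N}_\hos$ for every $N$, so Theorem \ref{thm:sym.cal}(i) yields $\Opw{\chi}\colon\cH^{-1}\to\cH^{1}$ boundedly; combined with the duality $|\la f,g\ra|\leq \norm{f}_1\norm{g}_{-1}$ (immediate from the selfadjointness of $H_0$) this gives $|\la\Opw{\chi}u,u\ra|\leq C\norm{u}_{-1}^2$, and it is enough to prove \eqref{sGarding} with $\tilde a$ in place of $a$.

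Next, set $b:=\sqrt{\tilde a}$. Because $\tilde a\geq 1$ is smooth and uniformly bounded below away from zero, an induction via the Fa\`{a} di Bruno formula shows $b\in S^0_\hos$, and $b$ is real. The key input, going beyond Theorem \ref{thm:sym.cal}(ii), is that the Weyl self-composition satisfies $r:=b\#_W b - \tilde a\in S^{-2}_\hos$, not merely $S^{-1}_\hos$: in the Moyal expansion of $b\#_W b$ the sub-principal term equals $\tfrac{1}{2\im}\{b,b\}\equiv 0$, so the first nontrivial correction to $b^2$ is the second-order Moyal term, which is of order $-2$. Because $b$ is real, $\Opw{b}$ is selfadjoint, hence
\[
\la\Opw{\tilde a}u,u\ra \;=\; \norm{\Opw{b}u}_{L^2}^2 \;-\; \la\Opw{r}u,u\ra \;\geq\; -\la\Opw{r}u,u\ra \ .
\]
Applying Theorem \ref{thm:sym.cal}(i) to $r\in S^{-2}_\hos$ yields $\Opw{r}\colon\cH^{-1}\to\cH^{1}$ boundedly, and the duality bound gives $|\la\Opw{r}u,u\ra|\leq\norm{\Opw{r}u}_1\norm{u}_{-1}\leq C\norm{u}_{-1}^2$; combined with the reduction step, this proves \eqref{sGarding}.

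The point requiring the most care is the $S^{-2}$ bound on the self-composition remainder: Theorem \ref{thm:sym.cal}(ii) only records the general $S^{-1}$ bound for Weyl products, but the extra order --- coming from the vanishing of $\{b,b\}$ --- is exactly what produces a G\r{a}rding loss in $\cH^{-1}$ rather than only $\cH^{-1/2}$. I would either cite the refined Weyl calculus in H\"ormander or Shubin, or verify this by hand by writing out the Moyal product of $b$ with itself and observing the cancellation of the first-order term.
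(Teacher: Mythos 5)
Your reduction step fails, and the failure is fatal to the whole strategy. The hypothesis only gives $a\geq 0$ on $x^2+\xi^2\geq R$; it gives no positive lower bound there. Since $\chi$ is compactly supported, outside $\mathrm{supp}\,\chi$ you have $\tilde a=a$, which may vanish or decay at infinity (e.g. $a(x,\xi)=\frac{x^2}{1+x^2+\xi^2}$ is $\geq 0$ everywhere but vanishes on the whole $\xi$-axis), so no choice of $M$ makes $\tilde a=a+M\chi\geq 1$ on all of $\R^2$. Without a uniform lower bound the exact square root $b=\sqrt{\tilde a}$ need not even be smooth, let alone belong to $S^0_\hos$; this ellipticity obstruction is precisely why G\r{a}rding-type inequalities for merely nonnegative symbols cannot be obtained from a square-root/Weyl-composition argument. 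Nor is the gap patchable inside your scheme: adding a constant $\e_0>0$ restores ellipticity but its removal costs $\e_0\norm{u}_0^2$, which is not controlled by $\norm{u}_{-1}^2$; adding a decaying weight such as $\e_0(1+x^2+\xi^2)^{-1}$ ruins the $S^0$ estimates for $\sqrt{\tilde a}$ (derivatives of $\tilde a$ are then not small relative to $\sqrt{\tilde a}$) and its removal again costs more than $\norm{u}_{-1}^2$. The subsequent steps (reality of $b$, the vanishing of $\{b,b\}$ and the two-step Weyl remainder, the duality bound via Theorem \ref{thm:sym.cal}$(i)$) are fine as conditional statements, but they never get off the ground.

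The paper's proof avoids ellipticity altogether: after the same kind of cutoff it regards the nonnegative symbol $a(1-\chi)$ as an Anti-Wick symbol, for which nonnegativity of the symbol immediately gives nonnegativity of the operator with no lower-bound assumption (Lemma \ref{lem:aw}$(i)$), and the discrepancy between the Anti-Wick and Weyl quantizations is a lower-order pseudodifferential operator coming from the Gaussian convolution (Lemma \ref{lem:aw}$(ii)$), handled exactly by the duality argument you use for your remainders. If you want to rescue your approach, you must replace the exact square root by this coherent-state/Anti-Wick positivity (or a Fefferman--Phong type argument), which is the content of Appendix \ref{app:Garding}.
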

This result is probably well known but   we couldn't find a proof of this exact statement  in the literature, so we prove it in  Appendix \ref{app:Garding}.

\paragraph{Essential spectrum.}
In this last paragraph we characterize  the essential spectrum of  pseudodifferential operators with symbols in $ S^0$. 
\begin{theorem}\label{thm:spectrum_ho}
Let $v \in S^0_\hos$ be real valued. Then
\begin{equation}
\label{spectrum}
\sigma_{ess}(\Opw{v}) = 
\left\lbrace
\lambda \in \R \colon \ \ 
\exists \, (x_j, \xi_j) \to \infty  \ \ 
\mbox{ s.t. }  \ \ 
v(x_j, \xi_j) \to \lambda 
\right\rbrace \ . 
\end{equation}
\end{theorem}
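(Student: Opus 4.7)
We set $\Sigma_\infty(v)$ for the set on the right-hand side of \eqref{spectrum}; the operator $\Opw{v}$ is bounded and selfadjoint on $L^2(\R)$ by Theorem \ref{thm:sym.cal}(i), so its essential spectrum is well defined. \emph{For the inclusion $\sigma_{ess}(\Opw{v})\subseteq\Sigma_\infty(v)$:} fix $\lambda\notin\Sigma_\infty(v)$. Then there exist $\epsilon, R>0$ with $|v(x,\xi)-\lambda|\geq\epsilon$ for $x^2+\xi^2\geq R$ (otherwise one could extract a sequence $(x_j,\xi_j)\to\infty$ with $v(x_j,\xi_j)\to\lambda$). Pick $g\in C_c^\infty(\R,\R)$ with $g\equiv 1$ on $(\lambda-\epsilon/2,\lambda+\epsilon/2)$ and $\mathrm{supp}(g)\subseteq(\lambda-\epsilon,\lambda+\epsilon)$; then $g\circ v$ has compact support in $\R^2$, hence $g\circ v\in S^\rho$ for every $\rho\in\R$, and $\Opw{g\circ v}$ is compact by Theorem \ref{thm:sym.cal}(iv). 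Theorem \ref{lem:fc}(i) gives $g(\Opw{v})-\Opw{g\circ v}\in\cS^{-1}$, also compact, so $g(\Opw{v})$ is compact. Since $g\equiv 1$ near $\lambda$, the spectral projector $E$ of $\Opw{v}$ onto $(\lambda-\epsilon/3,\lambda+\epsilon/3)$ satisfies $E=g(\Opw{v})E$, so $E$ is compact; being an orthogonal projection, it has finite rank, proving $\lambda\notin\sigma_{ess}(\Opw{v})$.

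\emph{For the inclusion $\Sigma_\infty(v)\subseteq\sigma_{ess}(\Opw{v})$:} given $\lambda\in\Sigma_\infty(v)$ and $(x_j,\xi_j)\to\infty$ with $v(x_j,\xi_j)\to\lambda$, I produce a Weyl singular sequence from Gaussian coherent states. Set $\Phi_0(x):=\pi^{-1/4}e^{-x^2/2}$ and $\Phi_j:=U(x_j,\xi_j)\Phi_0$, where $U(a,b)$ is the Weyl--Heisenberg phase-space translation; then $\|\Phi_j\|_{L^2}=1$ and $\Phi_j\rightharpoonup 0$ (by position localisation when $x_j\to\infty$ and by Riemann--Lebesgue oscillation when $\xi_j\to\infty$). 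The Weyl covariance $U(a,b)^{-1}\Opw{v}U(a,b)=\Opw{v(\cdot+a,\cdot+b)}$ yields
\[
\|(\Opw{v}-\lambda)\Phi_j\|_{L^2}=\|\Opw{w_j}\Phi_0\|_{L^2},\qquad w_j(x,\xi):=v(x+x_j,\xi+\xi_j)-\lambda.
\]
By symbolic calculus (Theorem \ref{thm:sym.cal}(ii)) combined with the Wigner-function representation of matrix elements, using $W_{\Phi_0}(x,\xi)=\pi^{-1}e^{-(x^2+\xi^2)}$, one obtains
\[
\|\Opw{w_j}\Phi_0\|_{L^2}^2=\frac{1}{\pi}\int_{\R^2}|w_j(x,\xi)|^2\,e^{-(x^2+\xi^2)}\,dx\,d\xi+o(1).
\]
Since $v\in S^0_\hos$, $v$ is bounded and $|\partial v|=O((1+x^2+\xi^2)^{-1/2})$; boundedness supplies a uniform Lebesgue dominator for $|w_j|^2$, and derivative decay forces $w_j(x,\xi)\to 0$ pointwise. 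Dominated convergence gives $\|(\Opw{v}-\lambda)\Phi_j\|_{L^2}\to 0$, so $\lambda\in\sigma_{ess}(\Opw{v})$ by Weyl's criterion.

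The second inclusion is the delicate direction. The decisive tool is the covariance of the Weyl calculus, which converts a non-uniform problem (approximating $\lambda$ by values of $v$ at infinity) into a uniform one: controlling $\Opw{w_j}$ acting on a single Gaussian ground state. The hypothesis $v\in S^0_\hos$ enters twice -- boundedness gives the dominating function, while decay of derivatives both forces $w_j\to 0$ pointwise and tames the $S^{-1}$ error in $\overline{w_j}\#w_j-|w_j|^2$ against $W_{\Phi_0}$; the short verification of this remainder is the only computation hidden in the $o(1)$.
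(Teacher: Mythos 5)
Your argument is correct, and for the ``hard'' inclusion $\Sigma_\infty(v)\subseteq\sigma_{ess}(\Opw{v})$ it is essentially the paper's proof: the paper also uses the coherent states $\Phi_{z_j}=\cT_{z_j}\Phi_0$, the covariance $\cT_z^{-1}\Opw{a}\cT_z=\Opw{a(\cdot+z)}$, the Gaussian matrix-element formula $\la\Opw{a}\Phi_0,\Phi_0\ra=\pi^{-1}\int a\,e^{-(x^2+\xi^2)}$, and dominated convergence (and it devotes a separate lemma to $\Phi_j\rightharpoonup0$, which you only sketch). The one point you defer --- the $o(1)$ coming from $w_j\# w_j-w_j^2$ --- is handled more cleanly in the paper: it writes $\Opw{v-\lambda}^*\Opw{v-\lambda}=\Opw{(v-\lambda)^2+r_{-1}}$ with a single $j$-independent $r_{-1}\in S^{-1}$ and kills $\la\Opw{r_{-1}}\Phi_j,\Phi_j\ra$ by compactness of $\Opw{r_{-1}}$ together with $\Phi_j\rightharpoonup0$; your translated remainder is the translate of this fixed $r_{-1}$ (the Moyal product commutes with phase-space translations), so your ``short verification'' does go through, but the uniformity in $j$ deserves exactly this remark, since the $w_j$ are not uniformly bounded in $S^0_\hos$. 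Where you genuinely diverge is the inclusion $\sigma_{ess}(\Opw{v})\subseteq\Sigma_\infty(v)$: the paper builds a parametrix $b=(1-\chi)/(v-\lambda)\in S^0$, shows $\Opw{v}-\lambda$ is Fredholm modulo compacts from symbolic calculus, and concludes that the spectrum near $\lambda$ is discrete; you instead invoke the Helffer--Sj\"ostrand functional calculus (Theorem \ref{lem:fc}(i)) to see that $g(\Opw{v})$ is compact when $g\circ v$ has compact support, hence the spectral projector near $\lambda$ has finite rank. Both are correct; the parametrix route uses only symbolic calculus and is the more self-contained choice, while your functional-calculus route is shorter given that Theorem \ref{lem:fc} is available, at the price of resting on that (heavier) black box.
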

We prove  this result in Appendix \ref{app:spectrum}.

\section{Pseudodifferential normal form}\label{sec:normalform}
The first step of our proof is to use  pseudodifferential normal form to extract from the original equation \eqref{har.osc} an effective Hamiltonian having as  leading term the operator $\Opw{\la v_0 \ra}$.
Precisely we shall prove the following result.	
	
	\begin{proposition}
\label{prop:rpdnf}
Consider equation \eqref{har.osc} with $V(t,x,D) = \Opw{v(t,\cdot)}$ and $v  \in C^0_r(\T, S^0_{cl})$. 
There exists $\delta >0$ such that for any $ N \in \N$ the following holds true. 
 There  exists a change of coordinates 
 $\cU_N(t)$,  unitary in $L^2$ and  fulfilling
\begin{equation}
\label{unitary}
\forall r \geq 0  \quad \exists \, c_r, C_r >0 \colon \qquad c_r \norm{\vf}_r  \leq \norm{\cU_N(t)^{\pm} \vf}_r \leq C_r \norm{\vf}_r , \qquad \forall t \in \R , 
\end{equation}
 such that  $u(t)$ solves  \eqref{har.osc} if and only if 
 $\vf(t) := \cU_N(t) u(t)$ solves 
\begin{equation}
\label{res.eq}
\im \pa_t \vf = \big(\Opw{\la v_0 \ra} +  T_N + R_N(t) \big)\vf 
\end{equation}
where $\la v_0 \ra$ is the resonant average of the principal symbol  $v_0$ of $v$, 
  $T_N \in \cS^{-\delta}$ is  time independent and selfadjoint and 
 $R_N \in C^0(\T, \cS^{-N-\delta})$ is selfadjoint $\forall t$.
\end{proposition}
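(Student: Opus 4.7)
The plan is to combine a change of frame to the interaction picture of $H_0$ with an iterative resonant normal form that progressively eliminates the time-dependent non-resonant part of the perturbation at each order of the pseudodifferential calculus.

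\textbf{Interaction picture.} I set $w(t) := e^{\im t H_0} u(t)$. By the exact Egorov theorem (Theorem~\ref{thm:sym.cal}(iii)) this is a unitary transformation preserving each $\cH^r$-norm exactly, and $w$ solves $\im \partial_t w = \Opw{\tilde v(t,\cdot)}\, w$ with $\tilde v(t,x,\xi) := v(t, \phi^t(x,\xi)) \in C^0(\T, S^0_{cl})$, whose time mean equals $\la v \ra$ by definition~\eqref{res.av0} and whose principal symbol is $v_0 \circ \phi^t$. I fix $\delta \in (0,1]$ such that $v - v_0 \in C^0(\T, S^{-\delta})$.

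\textbf{Iterative normal form.} I construct inductively real, $2\pi$-periodic generators $\chi^{(k)} \in C^1(\T, S^{-(k-1)})$. For $k=1$ set $\chi^{(1)}(t) := \int_0^t (\tilde v(s) - \la v\ra)\, ds$, which is periodic since $\tilde v$ has time mean $\la v\ra$. Combining Lemma~\ref{MR}(ii)--(iii) with the commutator identity in Theorem~\ref{thm:sym.cal}(ii), conjugation by $e^{-\im \Opw{\chi^{(1)}}}$ cancels the order-zero time-dependent part of $\tilde v$ (through the $-\partial_t$ term in~\eqref{4.1.1}), yielding
\begin{equation*}
\im \partial_t \varphi_1 = \big( \Opw{\la v_0\ra} + T_1 + \Opw{r_1(t)}\big) \varphi_1,
\end{equation*}
with $T_1 \in \cS^{-\delta}$ time-independent (absorbing $\Opw{\la v_{<0}\ra}$ and the time mean of the Poisson bracket correction $\{\chi^{(1)},\tilde v\} \in S^{-1}$) and $r_1 \in C^0(\T, S^{-1})$ mean zero. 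For the inductive step $k \mapsto k+1$, the mean-zero property of $r_k$ allows me to set $\chi^{(k+1)}(t) := \int_0^t r_k(s)\,ds \in C^1(\T, S^{-k})$; conjugation by $e^{-\im \Opw{\chi^{(k+1)}}}$ cancels $\Opw{r_k(t)}$, while the new time-dependent term arises from $\im[\Opw{\chi^{(k+1)}},\Opw{\la v_0\ra}]$, whose principal symbol is the Poisson bracket $\{\chi^{(k+1)},\la v_0\ra\} \in S^{-k-1}$. Splitting this into mean and mean-zero parts, the former updates $T_{k+1}$ (which stays in $\cS^{-\delta}$ since the new contribution is of order $-k-1 \leq -\delta$), the latter becomes $r_{k+1} \in C^0(\T, S^{-k-1})$, and the induction continues.

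\textbf{Assembly.} Iterating until $r_{N'} \in S^{-N-\delta}$ and setting $\cU_N(t) := e^{-\im \Opw{\chi^{(N')}(t)}} \circ \cdots \circ e^{-\im \Opw{\chi^{(1)}(t)}} \circ e^{\im t H_0}$ produces the desired normal form. Each factor is unitary on $L^2$ (real symbols give selfadjoint Weyl operators), and the Sobolev bounds~\eqref{unitary} follow by composing the $\cL(\cH^r)$ estimates from Lemma~\ref{MR}(i).

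The main obstacle is the careful bookkeeping of orders through the Lie expansion at each step: one has to verify that the conjugation at step $k+1$, whose generator has order $-k$, leaves the already-normalized part $\Opw{\la v_0\ra} + T_k$ unchanged modulo terms of order $-(k+1)$ (the commutators with $T_k \in \cS^{-\delta}$ and $\Opw{r_k} \in \cS^{-k}$ are themselves of even lower order) and that the residual time-dependent symbols retain the mean-zero structure needed to continue—all while $T_k$ remains in $\cS^{-\delta}$ despite the accumulating time-averaged contributions from every step.
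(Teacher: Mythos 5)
Your proposal is correct and follows essentially the same route as the paper: gauge away $H_0$ and use the exact Egorov theorem to reduce to the symbol $v\circ\phi^t$, then iteratively eliminate the mean-zero (oscillating) part by conjugating with $e^{-\im\Opw{\chi^{(k)}}}$, where $\chi^{(k)}$ solves the trivial homological equation by time integration (the paper's Lemma \ref{lem:hom}), while the time averages accumulate into the time-independent $T_N \in \cS^{-\delta}$ and the Lie expansion (Lemma \ref{MR}) controls the order bookkeeping. The only cosmetic difference is that you normalize the full symbol $\tilde v - \la v\ra$ at the first step, whereas the paper removes only the oscillation of the principal part and lets the order $-\mu$ piece ride along in the first remainder; both give the same normal form.
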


The proof of the proposition is based on a series of change of coordinates, and at each step we shall solve a simple homological equation, which we now describe:

\begin{lemma}\label{lem:hom}
Assume that $H \in C^0(\T, \cS^m)$, $m \in \R$. There exists $X \in C^1(\T, \cS^m)$ such that
\begin{equation}
\label{eq:hom0}
H(t) - \frac{1}{2\pi}\int_0^{2\pi} H(s) \, \di s = \pa_t X(t) \ .
\end{equation}
If $H(t)$ is selfadjoint for any $t$, so is $X(t)$.
\end{lemma}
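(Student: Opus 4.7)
The plan is to solve the homological equation directly at the level of symbols, then quantize. Since the equation $\pa_t X(t) = H(t) - \bar H$ with $\bar H := \frac{1}{2\pi}\int_0^{2\pi} H(s)\, \di s$ is just a first-order ODE in $t$ whose right-hand side has zero mean over $\T$, there is no obstruction: the antiderivative is automatically $2\pi$-periodic.

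Concretely, write $H(t) = \Opw{h(t,\cdot)}$ with $h \in C^0(\T, S^m)$ and set
\begin{equation}
\bar h(x,\xi) := \frac{1}{2\pi}\int_0^{2\pi} h(s,x,\xi)\, \di s , \qquad
x(t,x,\xi) := \int_0^t \big( h(s,x,\xi) - \bar h(x,\xi) \big)\, \di s .
\end{equation}
Then $X(t) := \Opw{x(t,\cdot)}$ is the candidate solution. I would then verify three points: (a) $\bar h \in S^m$, which follows by bringing derivatives under the integral and using that the bound defining $S^m$ is uniform in $t \in \T$; (b) $x(t,\cdot) \in S^m$ for every $t \in [0,2\pi]$ by the same argument applied to the integrand $h - \bar h$; (c) $x(2\pi, \cdot) = x(0,\cdot) = 0$, since the integrand has zero mean. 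Thus $x$ extends to a periodic function of $t$, and $\pa_t x(t,\cdot) = h(t,\cdot) - \bar h \in C^0(\T, S^m)$, so $x \in C^1(\T, S^m)$.

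Quantizing gives $\pa_t X(t) = \Opw{\pa_t x(t,\cdot)} = \Opw{h(t,\cdot)} - \Opw{\bar h} = H(t) - \bar H$, which is exactly \eqref{eq:hom0}. For the selfadjointness statement, note that by Theorem \ref{thm:sym.cal}(ii) one has $\Opw{a}^* = \Opw{\bar a}$, so selfadjointness of $H(t)$ for every $t$ is equivalent to $h(t,\cdot)$ being real-valued for every $t$. Then $\bar h$ and $x(t,\cdot)$ are real-valued as integrals of real-valued functions, and hence $X(t) = \Opw{x(t,\cdot)}$ is selfadjoint.

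There is essentially no hard step here: the construction is just the standard primitive, and the fact that $h - \bar h$ has zero mean makes the periodicity requirement cost nothing. The only point to be a little careful about is the uniform-in-$t$ control of the Fr\'echet seminorms of $S^m$, but this is routine because the bound $|\pa_x^\alpha \pa_\xi^\beta h(t,x,\xi)| \leq C_{\alpha,\beta}(1 + x^2 + \xi^2)^{m - (\alpha+\beta)/2}$ holds uniformly in $t \in \T$ by the definition of $C^0(\T, S^m)$, so the same bound (with a constant multiplied by $2\pi$) holds for $\bar h$ and for $x(t,\cdot)$.
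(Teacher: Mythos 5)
Your proposal is correct and follows essentially the same route as the paper: solve the equation at the level of symbols by taking the primitive of $h(t,\cdot)$ minus its time average, note that zero mean gives $2\pi$-periodicity and that the $S^m$ seminorm bounds are uniform in $t$, then quantize and deduce selfadjointness from the real-valuedness of the symbol. Nothing to add.
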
	
\begin{proof}
We solve   \eqref{eq:hom0} at  the level of symbols; we put   $H(t) = \Opw{h(t, \cdot)}$ with   $h \in C^0(\T, S^m)$ and look for 
$X(t) = \Opw{\chi(t, \cdot)}$ with a symbol $\chi \in C^1(\T, S^m)$ to be determined.
Then \eqref{eq:hom0} reads
\begin{equation}
\label{hom.5}
h(t,x, \xi) - \frac{1}{2\pi} \int_0^{2\pi} h(s, x, \xi) \, \di s =  \pa_t \chi(t,x,\xi) \ , 
\end{equation} 
whose  solution is readily given by 
\begin{equation}
\label{hom.6}
\chi(t,x, \xi) := \int_0^{t} \left(   h(s, x,\xi) -  \frac{1}{2\pi} \int_0^{2\pi} h(s_1, x, \xi) \, \di s_1 \right)  \, \di s \ . 
\end{equation}
One verifies that $\chi  \in C^1(\T,S^m)$.
Finally if $H(t)$ is selfadjoint, its symbol $h(t,\cdot)$ is real valued, so is $\chi(t,\cdot)$ and thus  $X(t)$ is selfadjoint.
\end{proof}

\begin{proof}[Proof of Proposition \ref{prop:rpdnf}]
First we gauge away $H_0$, performing the change of variables
$u= e^{- \im t H_0} \psi$. Then $\psi$ solves 
$\im \pa_t \psi =  e^{\im t H_0} \, \Opw{v(t,\cdot)} \, e^{- \im t H_0}  \psi $.\\
Next recall that, 
by definition of the class $S^0_\cl$, the symbol $v$ decomposes as 
\begin{equation}
\label{v.dc}
v(t,\cdot) = v_0(t,\cdot)  + v_{-\mu}(t, \cdot) 
\end{equation}
with  $v_0(t, \cdot)$ positively homogeneous of degree 0 and $v_{-\mu} \in C^0_r(\T, S^{-\mu})$ for some $\mu >0$. 
So we decompose
\begin{equation}
\label{Ht}
\im \pa_t \psi = \left( \fV(t) + R(t) \right)  \psi 
\end{equation}
with
\begin{equation}
\label{Ht2}
\fV(t):= e^{\im t H_0} \, \Opw{v_0(t,\cdot)} \, e^{- \im t H_0} \ , 
\qquad
R(t) :=  e^{\im t H_0} \, \Opw{v_{-\mu}(t,\cdot)} \, e^{- \im t H_0} \ . 
\end{equation}
Note that $ \fV  \in C^0(\T, \cS^0)$  and $R \in C^0(\T, \cS^{-\mu})$, both selfadjoint $\forall t$.

Now we proceed inductively,  performing several changes of coordinates which remove the oscillating part  order by order.
Let us describe the first step.
We perform the  change of variables $\psi= e^{ -\im X_1(t)}
\vf$ where $X_1(t)\in C^1(\T, \cS^{0 })$,   selfadjoint  $\forall t$,
to be determined.
By Lemma \ref{MR},  $\vf$ fulfills the Schr\"odinger equation
$\im \pa_t \vf = \fH_1(t) \vf$ with
\begin{align*}
\fH_1(t)&:= e^{\im X_1(t)} \, (\fV(t)+ R(t)) \, e^{-\im X_1(t)} 
-\int_0^1 e^{\im s X_1(t) } \,(\pa_t X_1(t) ) \, e^{-\im
  s X_1(t) }  \ \di s  \ . 
  \end{align*}
  Then a Lie  expansion, see  \eqref{expansion}, gives
  \begin{align}
  \label{4.1.2}
  \fH_1(t)
  &= \fV(t)   - \pa_t  X_1(t) +    R_1(t) , \qquad
  R_{1}  \in  C^0(\T, \cS^{-\delta}), \mbox{  selfadjoint } \forall t  \ ,
\end{align} 
with \begin{equation}
\label{}
\delta := \min (1,\mu) >0 \ . 
\end{equation}
Lemma \ref{lem:hom} allows us to  choose     $X_1 \in C^1(\T, \cS^0)$, selfadjoint  $\forall t$, s.t.
\begin{equation}
\label{hom.4}
\fV(t)  - \frac{1}{2\pi}\int_0^{2\pi} \fV(s) \, \di s =  \pa_t X_1(t)  \ . 
\end{equation}
With this choice, and exploiting Egorov's Theorem, equation \eqref{4.1.2} reduces to 
  \begin{align}
\notag
\fH_1(t)&=\frac{1}{2\pi}\int_0^{2\pi} \fV(s)\, \di s + R_1(t)   
 =    \frac{1}{2\pi} \int_0^{2\pi}   \Opw{v_0(s,\phi^s(x,\xi))}   \, \di s + R_1(t) \\
 \label{4.1.3}
& =  \Opw{\la v_0\ra} + R_1(t) \ . 
\end{align} 
Note also that the map $e^{- \im X_1(t)}$ fulfills an estimate like  \eqref{unitary} thanks to Lemma \ref{MR}. \\
Then we iterate this procedure. Assume that at the $N$-th step we have the equation
\begin{equation}\label{ps0}
\im \pa_t \vf =\left(  \Opw{\la v_0\ra} + T_N + R_N(t)\right) \vf   =: \fH_N(t) \vf
\end{equation}
with $T_N \in \cS^{-\delta}$ selfadjoint and time independent, and $ R_N \in C^0(\T, \cS^{-N-\delta})$ selfadjoint $\forall t$.
The change of variables $\vf = e^{- \im X_{N+1}(t)} w$, with a certain $X_{N+1} \in C^1(\T, \cS^{-N-\delta})$ selfadjoint and to be determined, conjugates
\eqref{ps0} to $ \im \pa_t w = \fH_{N+1}(t) w$ with 
\begin{equation}
 \label{hom.7} 
 \fH_{N+1}(t) =\Opw{\la v_0\ra} + T_N + R_N(t)- \pa_t X_{N+1}(t) + R_{N+1}(t)  \ , 
 \end{equation} 
with $  R_{N+1} \in C^0(\T, \cS^{-(N+1)-\delta}) $ selfadjoint $\forall t$.
We exploit  again Lemma \ref{lem:hom} to find $X_{N+1}$ with the wanted properties solving
 $$
 \pa_t X_{N+1}  = R_{N}(t) - \frac{1}{2\pi}\int_0^{2\pi} R_{N}(s) \di s \ . 
 $$
  Thus $\fH_{N+1}(t)$ in \eqref{hom.7} becomes
 \begin{align*}
 \label{hom.8}
\fH_{N+1}(t)
& = \Opw{\la v_0\ra}  + T_{N+1}  + R_{N+1}(t)  , \qquad
T_{N+1}:= T_N +  \frac{1}{2\pi}\int_0^{2\pi} R_{N}(s)\, \di s  \ .
 \end{align*}
 Clearly $T_{N+1} \in \cS^{-\delta}$, it is 
 selfadjoint and time-independent. 
This proves the inductive step.

Finally we put $\cU_N(t) :=   e^{\im  X_N(t)} \, \cdots e^{\im  X_1(t)}e^{\im t  H_0}$; estimate \eqref{unitary} follows from Lemma \ref{MR}.
\end{proof}

\section{Mourre estimates}\label{sec:mourre}
In this section we study the spectral properties of the operator
$\Opw{\la v_0 \ra}$, which is the  leading term of equation \eqref{res.eq}.
In particular we prove that its essential spectrum contains a nontrivial interval $\cI_0$ and, over a subinterval $\cI\subset \cI_0$, it fulfills a Mourre estimate.
Then Mourre theory \cite{Mourre} guarantees that $\Opw{\la v_0 \ra}$ has  absolutely continuous spectrum in $\cI$, and 
therefore we might expect  dispersive behavior.
 More precisely we  prove the following result:
\begin{proposition}
\label{lem:har.mourre2}
Let $v \in \sV$ (see \eqref{sV}) and $v_0$ be its principal symbol. 
Denote by $\la v_0 \ra \in S^0$ its resonant average (see \eqref{res.av0}) and
$\fV_0 := \Opw{\la v_0 \ra }. $
The following holds true.
\begin{itemize}
\item[(i)] The spectrum  $ \sigma\big(\fV_0\big)$ contains a closed interval $\cI_0$.
\item[(ii)] There exist $A \in \cS^1$ selfadjoint, an open  interval $\cI \subset \cI_0$,  a function  $g_{\cI} \in C^\infty_c(\R, \R_{\geq 0})$ with $g_{\cI} \equiv 1$ over $\cI$
such that  
\begin{equation}
\label{mourre.ver}
g_{\cI}\big(\fV_0\big) \ \im [\fV_0, A] \  g_{\cI}\big(\fV_0\big) \geq \rho \,  g_{\cI}\big(\fV_0\big)^2  + \fK  
\end{equation}
for some  $\rho >0$  and $\fK$ a compact operator.
\end{itemize}

\end{proposition}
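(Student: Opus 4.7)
Part (i) is quick. Using the positive $0$-homogeneity of $v_0(t,\cdot)$ together with the fact that $\phi^t$ is rotation by $-t$, the average $\la v_0\ra(x,\xi)$ depends only on the polar angle $\theta$ on $\{x^2+\xi^2\geq 1\}$; denote this angular profile by $F(\theta)$. In polar coordinates $\{\cdot,h_0\}=\partial_\theta$, so the hypothesis $v\in\sV$ is precisely the statement that $F$ is non-constant. Hence $\cI_0:=F(S^1)=[\min F,\max F]$ is a non-degenerate closed interval, and Theorem~\ref{thm:spectrum_ho} (applied to $\la v_0\ra\in S^0$, whose limits at infinity are exactly the values of $F$) identifies $\cI_0$ with $\sigma_{\mathrm{ess}}(\fV_0)\subset\sigma(\fV_0)$.

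For (ii) I would design $A$ so that the principal symbol of $\im[\fV_0,A]$ equals $\{\la v_0\ra,h_0\}^2$ on $\{r^2\geq 1\}$, as anticipated at the end of Section~\ref{sec:app}. The natural ansatz is
$$A:=\Opw{a_0},\qquad a_0(x,\xi):=h_0(x,\xi)\,\{\la v_0\ra,h_0\}(x,\xi).$$
Since $h_0\in S^1$, $\{\la v_0\ra,h_0\}\in S^0$, and $a_0$ is real, one has $A\in\cS^1$ self-adjoint. Theorem~\ref{thm:sym.cal}(ii) gives $\im[\fV_0,A]=\Opw{\{\la v_0\ra,a_0\}}+\cS^{-2}$, and the Leibniz rule for the Poisson bracket yields
$$\{\la v_0\ra,a_0\}=\{\la v_0\ra,h_0\}^2+h_0\,\{\la v_0\ra,\{\la v_0\ra,h_0\}\}.$$
On $\{r^2\geq 1\}$ the second term vanishes: both $\la v_0\ra=F(\theta)$ and $\{\la v_0\ra,h_0\}=F'(\theta)$ depend only on $\theta$, and a short computation in polar coordinates shows that the Poisson bracket of two angle-only functions is zero. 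Hence $\{\la v_0\ra,a_0\}(x,\xi)=F'(\theta)^2$ on $\{r^2\geq 1\}$, exactly as needed.

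Next I would select the spectral window. Since $F\in C^\infty(S^1)$ is non-constant, its critical-value set $F(\{F'=0\})$ is a proper closed subset of $\cI_0$ (by Sard's theorem, or directly). Fix a regular value $\lambda_0\in\mathrm{int}(\cI_0)$; compactness of $S^1$ and continuity of $F'$ give $\delta,\rho_0>0$ such that $F'(\theta)^2\geq\rho_0$ whenever $F(\theta)\in[\lambda_0-2\delta,\lambda_0+2\delta]$. Set $\cI:=(\lambda_0-\delta,\lambda_0+\delta)$ and pick $g_\cI\in C_c^\infty(\R,[0,1])$ equal to $1$ on $\overline{\cI}$ and supported in $[\lambda_0-2\delta,\lambda_0+2\delta]$. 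Functional calculus (Theorem~\ref{lem:fc}) gives $g_\cI(\fV_0)=\Opw{g_\cI(\la v_0\ra)}$ modulo $\cS^{-1}$; since $\cS^{-1}$ consists of compact operators (Theorem~\ref{thm:sym.cal}(iv)), combining with the previous paragraph shows that the left-hand side of \eqref{mourre.ver} equals $\Opw{g_\cI(\la v_0\ra)^2\{\la v_0\ra,a_0\}}$ modulo a compact operator. The symbol $b:=g_\cI(\la v_0\ra)^2\big(\{\la v_0\ra,a_0\}-\rho\big)$, with $\rho:=\rho_0/2$, is bounded on $\{r^2<1\}$ and non-negative on $\{r^2\geq 1\}$ (either $g_\cI(F)=0$, or $F\in\mathrm{supp}\,g_\cI$ forces $F'^2\geq\rho_0>\rho$). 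The strong Gårding inequality (Theorem~\ref{thm:garding}) then yields $\Opw{b}\geq -CH_0^{-2}$, a lower bound by a compact operator. Absorbing all compact remainders into a single $\fK$ produces \eqref{mourre.ver}.

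The main obstacle is pinning down the right conjugate $a_0=h_0\{\la v_0\ra,h_0\}$: it is this quadratic-in-$\{\la v_0\ra,h_0\}$ combination that turns the principal symbol of the commutator into $\{\la v_0\ra,h_0\}^2$ at infinity, with the iterated bracket $h_0\{\la v_0\ra,\{\la v_0\ra,h_0\}\}$ vanishing there thanks to the $0$-homogeneity of $\la v_0\ra$ and the rotational symmetry of the flow. Once $a_0$ is in hand, the rest is standard symbolic/functional calculus together with one application of strong Gårding.
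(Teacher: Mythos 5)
Your proposal is correct and follows essentially the same route as the paper: the same conjugate operator $A=\Opw{h_0\,\{\la v_0\ra,h_0\}}$, the same use of $0$-homogeneity to kill the iterated bracket at infinity and to identify $\sigma_{ess}(\fV_0)$ with the range of the angular profile, the same Sard-type selection of a spectral window on which $\{\la v_0\ra,h_0\}^2$ is bounded below, and the same combination of symbolic/functional calculus, strong G\r{a}rding, and compactness absorptions (including the final replacement of $\Opw{g_\cI(\la v_0\ra)^2}$ by $g_\cI(\fV_0)^2$ modulo a compact operator, which you leave implicit but which is exactly the paper's last step). The only cosmetic differences are that you verify the lower bound on $F'^2$ near a regular value by a compactness argument rather than via finitely many preimage neighborhoods, and you feed the whole symbol $g_\cI(\la v_0\ra)^2(\{\la v_0\ra,a_0\}-\rho)$ into G\r{a}rding at once instead of first isolating the compactly supported remainder $w$.
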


{\bf Notation:} 
In the following, we shall denote by $\fK$ a general compact operator on $L^2$, which might change from line to line. 
We shall also constantly  use  that pseudodifferential operators in $\cS^{-\mu}$, $\mu>0$, are compact on $L^2$.

\vspace{1em}
In order to prove Proposition \ref{lem:har.mourre2} we start with some  preparation. We shall denote by
 $\cA \colon  \T \times \R_{>0} \to \R^2\setminus\{0\}$, $(\vartheta, I) \to (x,\xi)$ the action-angle change of coordinates defined by
  \begin{equation}
  \label{aa}
x = \sqrt{2I} \sin(\vartheta), \ \ \xi = \sqrt{2I} \cos(\vartheta), \qquad \forall (x, \xi) \neq (0,0)  \ .
  \end{equation}
Given a function $f \in C^\infty(\R^2, \C)$, we shall denote by
 $\wt f:= f \circ \cA$ the function $f$ expressed in action angle coordinates \eqref{aa}. 
 Note that since  \eqref{aa} is a canonical transformation, one has
 \begin{equation}
 \label{pp.aa}
\wt{ \{ f, g \}} = \{ \wt f, \wt g \}=: \pa_I \wt f \, \pa_\vartheta \wt g - \pa_\vartheta \wt f \, \pa_I \wt g \ . 
 \end{equation}
  \begin{lemma}\label{lem:aa} 
  Assume that $f \in C^\infty(\R^2, \C)$ is positively homogeneous of degree 0, i.e. it fulfills \eqref{hom.fun}. 
Then   
  \begin{equation}
  \label{wtv.angle}
 \wt f(\vartheta, I) =  \wt f(\vartheta, 1) , \quad \forall I > \frac12 \ .
  \end{equation}
  In particular $\pa_I  \wt f(\vartheta, I)  = 0$ for any $I > \frac12$.
  \end{lemma}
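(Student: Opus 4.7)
The plan is to unfold the definition of $\wt f$ in action-angle coordinates and apply the homogeneity hypothesis \eqref{hom.fun} directly, after verifying that the relevant points lie in the region $x^2+\xi^2\geq 1$ where the hypothesis is available. The claim $\pa_I \wt f = 0$ is then immediate from the first part.

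\medskip

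More concretely, I would write
\begin{equation}
\wt f(\vartheta, I) = f\bigl(\sqrt{2I}\sin\vartheta,\, \sqrt{2I}\cos\vartheta\bigr) = f\bigl(\sqrt{2I}\cdot (\sin\vartheta,\cos\vartheta)\bigr)
\end{equation}
and observe that for every $I > \tfrac12$ the point $(\sqrt{2I}\sin\vartheta,\sqrt{2I}\cos\vartheta)$ has squared norm $2I > 1$, while the base point $(\sqrt{2}\sin\vartheta,\sqrt{2}\cos\vartheta)$ corresponding to $I=1$ has squared norm $2\geq 1$. So both lie on the same ray from the origin and in the region where \eqref{hom.fun} applies.

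\medskip

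To conclude, I would split into two short cases depending on whether $I\geq 1$ or $\tfrac12 < I < 1$. If $I\geq 1$, set $\lambda=\sqrt{I}\geq 1$ and $(x_0,\xi_0)=(\sqrt{2}\sin\vartheta,\sqrt{2}\cos\vartheta)$; then $x_0^2+\xi_0^2=2\geq 1$ and \eqref{hom.fun} gives $f(\lambda x_0,\lambda \xi_0)=f(x_0,\xi_0)$, i.e.\ $\wt f(\vartheta,I)=\wt f(\vartheta,1)$. If $\tfrac12<I<1$, set $\lambda=1/\sqrt{I}>1$ and apply \eqref{hom.fun} with base point $(\sqrt{2I}\sin\vartheta,\sqrt{2I}\cos\vartheta)$, whose squared norm is $2I>1$, obtaining the same conclusion. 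The second assertion $\pa_I\wt f=0$ on $\{I>\tfrac12\}$ then follows by differentiating \eqref{wtv.angle}.

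\medskip

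There is no real obstacle here: the statement is essentially a tautological consequence of the fact that homogeneity of degree $0$ means constancy along radial directions, and action-angle coordinates keep the angle fixed while varying the radius. The only mild point to be careful about is that the homogeneity hypothesis \eqref{hom.fun} is only assumed for $\lambda\geq 1$ and in the region $x^2+\xi^2\geq 1$, which forces the two-case argument above rather than a one-line rescaling.
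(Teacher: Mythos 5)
Your proof is correct and follows essentially the same route as the paper: unfold $\wt f$ in action-angle coordinates and apply the homogeneity \eqref{hom.fun} radially. The only difference is cosmetic: the paper avoids your two-case split by scaling from the unit-circle point $(\sin\vartheta,\cos\vartheta)$ (which satisfies $x^2+\xi^2=1$) with $\lambda=\sqrt{2I}\geq 1$ for all $I\geq \tfrac12$ at once, so the case distinction you describe as forced is in fact unnecessary.
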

  \begin{proof}
 Being $f$  positively homogeneous of degree 0, 
    \begin{equation}
  \wt f(\vartheta, I) = f( \sqrt{2I} \sin(\vartheta),  \sqrt{2I} \cos(\vartheta) )
  =
 f( \sin(\vartheta), \cos(\vartheta)  )
  =  
   \wt f(\vartheta, \frac12) \  ,  \quad  \forall I \geq \frac12 \ ,
  \end{equation}
implying in particular the identity \eqref{wtv.angle}.
  \end{proof}
Now we begin  the  proof of  Proposition 
\ref{lem:har.mourre2}, which is 
 splitted in several lemmas. 
 \begin{lemma}
 Let $v \in C^0_r(\T, S^0_{cl})$ and denote by  $v_0$ its principal symbol. Then
 \begin{itemize}
 \item[(i)] $\la v_0\ra$ is real valued and  positively homogeneous of degree 0, i.e.
\begin{equation}
\label{v0-hom}
\la v_0 \ra (\lambda x, \lambda \xi) = \la v_0 \ra ( x,  \xi)  , \quad \forall \lambda \geq 1 , \ \ \ \forall x^2 + \xi^2 \geq 1 \ . 
\end{equation}
 \item[(ii)] If furthermore $v \in \sV$, then 
\begin{equation}
\label{I0}
 \cI_0 := \left[ \min_{{x^2 + \xi^2} = 1} \la v_0\ra(x, \xi), \, \max_{x^2 + \xi^2 = 1}  \la v_0\ra(x,\xi)   \right] \subseteq \sigma_{ess}\big(\fV_0\big)
\end{equation}
 and it has nonempty interior.
 \end{itemize}
 \end{lemma}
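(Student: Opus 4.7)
My plan is to handle (i) by a direct computation and (ii) by combining the essential-spectrum characterization of Theorem \ref{thm:spectrum_ho} with a contradiction argument that invokes the defining condition \eqref{cond} of $\sV$.

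For (i), reality of $\la v_0 \ra$ is immediate from Remark \ref{rem:res.av} since $v$, hence $v_0$, is real. For the homogeneity \eqref{v0-hom}, the key observation is that the classical flow $\phi^t$ in \eqref{flow.ham} is a rotation, so it commutes with positive scaling, $\phi^t(\lambda x,\lambda\xi)=\lambda\phi^t(x,\xi)$, and preserves the Euclidean norm. Therefore for $\lambda\geq 1$ and $x^2+\xi^2\geq 1$ (so that $\phi^t(x,\xi)$ still has norm $\geq 1$), Definition \ref{symbol.ao2}(ii) applied to $v_0(t,\cdot)$ yields
\begin{equation}
v_0(t,\phi^t(\lambda x,\lambda\xi)) = v_0(t,\lambda\phi^t(x,\xi)) = v_0(t,\phi^t(x,\xi)) \ .
\end{equation}
Averaging in $t$ gives \eqref{v0-hom}.

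For (ii), the inclusion $\cI_0\subseteq\sigma_{ess}(\fV_0)$ should follow at once from Theorem \ref{thm:spectrum_ho}: for any unit vector $(x_*,\xi_*)$, the sequence $(j x_*,j\xi_*)$ goes to infinity and, by (i), satisfies $\la v_0\ra(j x_*,j\xi_*)=\la v_0\ra(x_*,\xi_*)$ for every $j\in\N$. Since $\la v_0\ra$ is continuous and the unit circle is compact and connected, its image is a closed interval, which is $\cI_0$ by \eqref{I0}.

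The substantive part is showing that $\cI_0$ has nonempty interior, which I would prove by contradiction, using condition \eqref{cond}. If $\cI_0=\{c\}$ were a single point, then $\la v_0\ra\equiv c$ on the unit circle and, by (i) again, on the whole region $\{x^2+\xi^2\geq 1\}$. In the action-angle coordinates \eqref{aa} this reads $\wt{\la v_0\ra}(\vartheta,I)=c$ for all $\vartheta$ and all $I\geq \tfrac12$ (consistent with Lemma \ref{lem:aa}), so $\pa_\vartheta\wt{\la v_0\ra}\equiv 0$ there. Since $\wt{h_0}(\vartheta,I)=I$, formula \eqref{pp.aa} collapses to
\begin{equation}
\wt{\{\la v_0\ra,h_0\}} \;=\; \pa_I\wt{\la v_0\ra}\cdot\pa_\vartheta\wt{h_0}-\pa_\vartheta\wt{\la v_0\ra}\cdot\pa_I\wt{h_0} \;=\; -\pa_\vartheta\wt{\la v_0\ra} \;\equiv\; 0
\end{equation}
on $\{x^2+\xi^2\geq 1\}$, contradicting $v\in\sV$.

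No step looks genuinely hard; the only subtlety I want to flag is making sure that the information "$\la v_0\ra\equiv c$ on the unit circle" really does propagate to the full region $x^2+\xi^2\geq 1$ via positive homogeneity, which is exactly what Lemma \ref{lem:aa} delivers. Everything else is routine: reality from Remark \ref{rem:res.av}, compactness/connectedness of $S^1$, and the external input Theorem \ref{thm:spectrum_ho} for the essential spectrum.
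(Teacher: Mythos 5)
Your proposal is correct and follows essentially the same route as the paper: homogeneity of $\la v_0\ra$ from the explicit average along the rotation flow, Theorem \ref{thm:spectrum_ho} to place $\cI_0$ inside $\sigma_{ess}(\fV_0)$, and a contradiction with the condition $\{\la v_0\ra,h_0\}\not\equiv 0$ defining $\sV$ to rule out $\cI_0$ being a point. The only (harmless) difference is that you prove just the inclusion via radial sequences $(jx_*,j\xi_*)$, whereas the paper identifies $\sigma_{ess}(\fV_0)$ exactly with the image of $\la v_0\ra$ on the unit circle.
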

 \begin{proof}
 $(i)$
 It follows from the explicit expression \eqref{symb} using that  $v_0(t,\cdot)$ is real valued and  positively homogeneous of degree 0. \\
$(ii)$ 
By Theorem \ref{thm:spectrum_ho}
\begin{align*}
 \sigma_{ess}\big(\fV_0\big)
&= \left\lbrace \lambda \in \R \colon   \exists\,  (x_j, \xi_j) \to \infty \mbox{ s.t. } \la v_0 \ra (x_j, \xi_j) \to \lambda  \right\rbrace . 
\end{align*}
Since $\la v_0\ra$ is positively homogeneous of degree 0 and definitively one has  $x_j^2 + \xi_j^2 \geq 1$, we have that 
$$
\la v_0\ra (x_j, \xi_j) = \la v_0 \ra ( \frac{x_j}{x_j^2 + \xi_j^2}, \frac{\xi_j}{x_j^2 + \xi_j^2}) \ ,
$$
and we deduce that 
\begin{align*}
 \sigma_{ess}\big(\fV_0\big)
& =  \left\lbrace \lambda \in \R \colon   \exists\,  \{(\wt x_j, \wt \xi_j)\}_{j\geq 1}\mbox{ with } {\wt x}_j^2 + {\wt \xi}_j^2 = 1 \ 
 \forall j \ \ \  \mbox{ s.t. } \la v_0 \ra(\wt x_j, \wt \xi_j) \to \lambda  \right\rbrace \ .
 \end{align*}
Finally, since the function $\la v_0 \ra$ is smooth, this set corresponds to the image of $\la v_0 \ra\vert_{x^2+\xi^2 = 1}$ and therefore 
\begin{align*}
 \sigma_{ess}(\fV_0)
& =[ \min_{x^2 + \xi^2 = 1} \la v_0\ra(x, \xi), \, \max_{x^2 + \xi^2 = 1}  \la v_0\ra(x,\xi)   ] = \cI_0 .
\end{align*}
To prove  that  $\cI_0$ is a nontrivial  interval, it is sufficient to show that $\la v_0\ra$ is not constant on $x^2+\xi^2 = 1$. 
Assume by contradiction that $\la v_0\ra$ is constant on the circle; then  by homogeneity, it is constant on the whole set $x^2 + \xi^2 \geq 1$. 
But then 
$\{ h_0, \la v_0 \ra \} \equiv 0$ in $x^2 + \xi^2 \geq 1$, contradicting the assumption that $v \in \sV$.
\end{proof}

Next define the operator 
\begin{equation}\label{def:A}
A := \Opw{a} , \qquad a :=  \{ \la v_0 \ra , h_0 \}   \, h_0  \in S^1 \ , \ \ \mbox{real valued}.
\end{equation}
Clearly $A \in \cS^1$ and it is selfadjoint. 

\begin{lemma}\label{lem:Vmourre1}
Let   $A$ be defined in \eqref{def:A}. For any $\cI \subset \cI_0$ and $g_{\cI} \in C^\infty_c(\R, \R_{\geq 0})$ with $g_{\cI} \equiv 1$ over $\cI$  there exists a compact operator $\fK$ such that 
 \begin{equation}
 \label{gVAg}
g_{\cI}(\fV_0)  \, \im [\fV_0, A] \, g_{\cI}(\fV_0) = \Opw{g_\cI(\la v_0\ra)^2\, \{\la v_0\ra ,h_0\}^2} + \fK \ .
 \end{equation}
\end{lemma}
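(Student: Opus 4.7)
The plan is to peel off compact remainders on both sides and in the middle of the sandwich in \eqref{gVAg} until only a leading symbolic product is left, then expand the Poisson bracket by Leibniz, and finally exploit the homogeneity of $\la v_0 \ra$ to dispose of the unwanted cross term.

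First I would apply Theorem \ref{lem:fc}(i) to write $g_\cI(\fV_0) = \Opw{g_\cI(\la v_0\ra)} + R_1$ with $R_1 \in \cS^{-1}$, and the commutator part of Theorem \ref{thm:sym.cal}(ii) to write $\im[\fV_0,A] = \Opw{\{\la v_0\ra,a\}} + R_2$ with $R_2 \in \cS^{-2}$ (since $\fV_0\in\cS^0$, $A\in\cS^1$, and the theorem gives $d-\{\cdot,\cdot\}\in S^{\rho+\mu-3}$). Both $R_1$ and $R_2$ are compact. Expanding the triple product $g_\cI(\fV_0)\,\im[\fV_0,A]\,g_\cI(\fV_0)$, every summand containing at least one factor $R_j$ is a product of a compact and bounded operators (the other factors have symbols in $S^0$), hence compact. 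Two further applications of the composition rule of Theorem \ref{thm:sym.cal}(ii) reduce the only surviving term $\Opw{g_\cI(\la v_0\ra)}\Opw{\{\la v_0\ra,a\}}\Opw{g_\cI(\la v_0\ra)}$ to $\Opw{g_\cI(\la v_0\ra)^2\,\{\la v_0\ra,a\}}$ modulo $\cS^{-1}$, again compact.

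It remains to identify the principal symbol $\{\la v_0\ra,a\}$. Since $a=\{\la v_0\ra,h_0\}\,h_0$, Leibniz gives
\[
\{\la v_0\ra,a\} \;=\; \{\la v_0\ra,h_0\}^2 + \{\la v_0\ra,\{\la v_0\ra,h_0\}\}\,h_0.
\]
The first summand is exactly the symbol appearing in \eqref{gVAg}. The core step of the proof is therefore to show that the Weyl quantization of $g_\cI(\la v_0\ra)^2\,\{\la v_0\ra,\{\la v_0\ra,h_0\}\}\,h_0$ is compact. I would verify this by passing to the action-angle coordinates \eqref{aa}, in which $\wt h_0=I$ and, by Lemma \ref{lem:aa}, $\wt{\la v_0\ra}$ is independent of $I$ for $I\ge 1/2$. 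Using the canonical invariance \eqref{pp.aa}, in that region $\wt{\{\la v_0\ra,h_0\}}=-\pa_\vartheta\wt{\la v_0\ra}$ is again independent of $I$, and two $I$-independent functions Poisson-commute, so $\wt{\{\la v_0\ra,\{\la v_0\ra,h_0\}\}}\equiv 0$ for $I\ge 1/2$. Pulling back, $\{\la v_0\ra,\{\la v_0\ra,h_0\}\}$ is a smooth function supported in $\{x^2+\xi^2\le 1\}$; multiplied by $h_0$ and $g_\cI(\la v_0\ra)^2$ it remains compactly supported and smooth, hence of Schwartz class, so its Weyl quantization is smoothing and in particular compact. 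Absorbing all compact contributions into a single operator $\fK$ produces \eqref{gVAg}.

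The only non-routine step is the vanishing of the iterated Poisson bracket; it relies essentially on the degree-$0$ homogeneity of $\la v_0\ra$ combined with the very simple form of $h_0$ in action-angle coordinates. Everything else is bookkeeping within the symbolic calculus and the observation that $\cS^{-\mu}$, $\mu>0$, operators are compact on $L^2$.
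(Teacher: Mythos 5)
Your proposal is correct and follows essentially the same route as the paper: decompose $g_\cI(\fV_0)$ via the functional calculus and the commutator via symbolic calculus, reduce to $\Opw{g_\cI(\la v_0\ra)^2\{\la v_0\ra,a\}}$ modulo compacts, expand $\{\la v_0\ra,a\}$ by Leibniz, and kill the cross term $\{\la v_0\ra,\{\la v_0\ra,h_0\}\}h_0$ using action-angle coordinates and the degree-$0$ homogeneity of $\la v_0\ra$ (your observation that two $I$-independent functions Poisson-commute is just a compact rephrasing of the paper's explicit computation).
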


\begin{proof}
By  symbolic calculus we get
\begin{equation}
\label{eq:V.A.comm}
\im [ \fV_0 , A ] = \Opw{\{ \la v_0 \ra, a\}+ r_{-2}}, \quad
r_{-2} \in S^{-2},  \mbox{ real valued } \ . 
\end{equation}
Consider now the operator $g_{\cI}( \fV_0)$. Using  Lemma  \ref{lem:fc} we write 
\begin{align}
\label{g.dec}
g_{\cI}(\fV_0) & =  
\Opw{g_\cI(\la v_0 \ra)} + \fK
\end{align}
with $\fK$ a  compact operator.
We use the expressions \eqref{eq:V.A.comm} and \eqref{g.dec}, the fact that  
$\{ \la v_0 \ra, a\}$, $g_\cI(\la v_0 \ra)$ belong to $S^0$
and symbolic calculus  to
finally get 
\begin{align}
\label{g.dec2}
g_{\cI}( \fV_0) \, \im [\fV_0, A] \, g_{\cI}( \fV_0) 
& =  \Opw{g_\cI(\la v_0\ra)^2\, \{\la v_0\ra ,a \}} + \fK
\end{align}
with $\fK$  compact.
Next we compute $\{ \la v_0 \ra, a\}$. Using \eqref{def:A},
\begin{equation}
\label{v0.a}
\{ \la v_0 \ra, a\} =  \{ \la v_0 \ra, h_0\}^2 + w, \qquad w:= 
\{\la v_0 \ra,  \, \{ \la v_0\ra , h_0\} \} h_0 . 
\end{equation}
We claim that $w \in C^\infty_c(\R^2, \R) \subset S^{-\infty}$; then   \eqref{gVAg} follows by inserting this decomposition  in  \eqref{g.dec2}.
To prove  that $w \in C^\infty_c(\R^2, \R)$
we pass to action-angle variables defined in \eqref{aa}. As 
\begin{equation}
\label{pp.ac}
 \wt h_0 = I \ , \quad
 \wt{\{ \la v_0\ra, h_0\}} = - \pa_\vartheta \wt{ \la v_0\ra}(\vartheta, I) \ , 
\end{equation}
we get
$$
\wt{\{ \la v_0 \ra,  \, \{ \la v_0\ra, h_0\}\}} = \{ \wt{\la v_0 \ra}, \, \{\wt {\la v_0 \ra}, I \} \} 
\stackrel{\eqref{pp.aa}}{=} -\frac{\pa  \wt{\la v_0 \ra}}{\pa I} \, \frac{\pa^2  \wt{\la v_0 \ra}}{\pa \vartheta^2} +
\frac{\pa   \wt{\la v_0 \ra}}{\pa \vartheta} \, \frac{\pa^2  \wt{\la v_0 \ra}}{\pa I \pa \vartheta } \ . 
$$
Since $\la v_0 \ra$ is positively homogeneous of degree 0 (see \eqref{v0-hom}), by Lemma \ref{lem:aa} we have that
$\pa_I  \wt{\la v_0 \ra}(\vartheta, I)  = \pa^2_{I \vartheta}  \wt{\la v_0 \ra}(\vartheta, I) = 0$ for any $I > \frac12$, proving 
that $\wt{\{ \la v_0 \ra,  \, \{ \la v_0\ra, h_0\}\}} \in C^\infty_c(\R^2\setminus \{0\}, \R)$. Hence also $w \in C^\infty_c(\R^2, \R)$.
\end{proof}
The next one is the most important lemma, which proves the commutator estimate.
\begin{lemma}\label{lem:Vmourre2}
There exist an interval $\cI \subset \cI_0$, a function $g_\cI\in C^\infty_c(\R, \R_{\geq 0})$ with $\textup{supp } g_\cI \subset \cI_0$, $ g_\cI\equiv 1$ over $\cI$ and  numbers $\rho, C >0$  such that
\begin{equation}
\label{quant.sign}
\la \Opw{g_\cI(\la v_0\ra)^2\, \{\la v_0\ra,h_0\}^2} u, u \ra \geq \rho \,  \la \Opw{g_\cI(\la v_0\ra)^2}  u, u \ra - C \norm{u}_{-1}^2,  \qquad \forall u \in L^2   \ . 
\end{equation}
\end{lemma}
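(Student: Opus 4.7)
The plan is to reduce the lemma to a pointwise nonnegativity at infinity of an explicit symbol in $S^0$, and then invoke the strong G\r{a}rding inequality (Theorem~\ref{thm:garding}). I first work in the action--angle coordinates \eqref{aa}. By the homogeneity \eqref{v0-hom} and Lemma~\ref{lem:aa}, $\wt{\la v_0\ra}(\vartheta, I)$ is independent of $I$ for $I \geq \frac12$; combined with \eqref{pp.ac} this gives $\wt{\{\la v_0\ra, h_0\}}(\vartheta, I) = -\pa_\vartheta \wt{\la v_0\ra}(\vartheta, 1)$ in the same region. Setting $\phi(\vartheta) := \wt{\la v_0\ra}(\vartheta, 1) \in C^\infty(\T, \R)$, on $\{x^2+\xi^2 \geq 1\}$ the functions $\la v_0\ra$ and $\{\la v_0\ra, h_0\}^2$ pull back to $\phi(\vartheta)$ and $\phi'(\vartheta)^2$, respectively.

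Next I choose the interval and the cutoff. Since $v \in \sV$, the function $\phi$ is not constant, so its range $\cI_0 = \phi(\T)$ has nonempty interior. Sard's theorem for the smooth map $\phi\colon \T \to \R$ yields that its set $K$ of critical values is closed and of Lebesgue measure zero, so $\mathrm{int}(\cI_0)\setminus K$ is a nonempty open subset of $\R$. I pick $\lambda_* \in \mathrm{int}(\cI_0)\setminus K$ and $\varepsilon > 0$ such that $[\lambda_*-\varepsilon, \lambda_*+\varepsilon] \subset \mathrm{int}(\cI_0)\setminus K$, set $\cI := [\lambda_*-\varepsilon/2, \lambda_*+\varepsilon/2]$, and choose $g_\cI \in C^\infty_c(\R, [0,1])$ with $g_\cI \equiv 1$ on $\cI$ and $\mathrm{supp}\, g_\cI \subset [\lambda_*-\varepsilon, \lambda_*+\varepsilon] \subset \cI_0$. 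The compact set $\phi^{-1}(\mathrm{supp}\, g_\cI) \subset \T$ contains no critical point of $\phi$, so continuity gives $\phi'(\vartheta)^2 \geq 2\rho$ on it for some $\rho > 0$.

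These choices force the pointwise bound. Whenever $(x,\xi)$ satisfies $x^2+\xi^2 \geq 1$ and $g_\cI(\la v_0\ra(x,\xi)) \neq 0$, the associated angle satisfies $\vartheta(x,\xi) \in \phi^{-1}(\mathrm{supp}\, g_\cI)$, whence $\{\la v_0\ra, h_0\}^2(x,\xi) = \phi'(\vartheta(x,\xi))^2 \geq 2\rho$. Consequently the symbol
$$
b(x,\xi) := g_\cI(\la v_0\ra)^2\,\{\la v_0\ra, h_0\}^2 - \rho\, g_\cI(\la v_0\ra)^2 \in S^0
$$
satisfies $b(x,\xi) \geq \rho\, g_\cI(\la v_0\ra(x,\xi))^2 \geq 0$ on $\{x^2+\xi^2\geq 1\}$. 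Applying Theorem~\ref{thm:garding} yields $\la \Opw{b} u, u\ra \geq -C\|u\|_{-1}^2$, which rearranges to \eqref{quant.sign}.

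The only genuine obstacle is producing the regular value $\lambda_*$ inside the interior of the range of $\phi$; this is a clean application of Sard's theorem on the compact one-dimensional manifold $\T$, made possible by the defining hypothesis $v \in \sV$ which prevents $\phi$ from being constant. Everything else is standard pseudodifferential bookkeeping plus the G\r{a}rding inequality already established.
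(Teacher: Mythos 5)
Your proposal is correct and follows essentially the same route as the paper: reduction to the pointwise inequality $g_\cI(\la v_0\ra)^2\{\la v_0\ra,h_0\}^2 \geq \rho\, g_\cI(\la v_0\ra)^2$ on $x^2+\xi^2\geq 1$ via action--angle variables and the homogeneity of $\la v_0\ra$, selection of regular values through Sard's theorem, and conclusion by the strong G\r{a}rding inequality applied to the difference symbol. The only (harmless) variation is that you place $\textup{supp}\, g_\cI$ inside the open set of regular values and get the lower bound on $|\pa_\vartheta \widetilde{\la v_0\ra}|$ by compactness of $\phi^{-1}(\textup{supp}\, g_\cI)$, whereas the paper works with the finitely many preimages of a single regular value and local diffeomorphism neighborhoods --- a slightly more detailed but equivalent bookkeeping.
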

\begin{proof}
We claim the existence of  $\cI \subset \cI_0$  and $\rho >0$ such that
\begin{equation}
\label{sign}
g_\cI(\la v_0\ra)^2\, \{\la v_0\ra,h_0\}^2 \geq \rho  \,  g_\cI(\la v_0\ra)^2  , \quad \forall \, x^2 + \xi^2 \geq  1 . 
\end{equation}
Then the Strong G\r{a}rding inequality in Theorem \ref{sGarding} gives \eqref{quant.sign}. \\
To prove \eqref{sign} we pass to action-angle variables \eqref{aa}.
Using \eqref{pp.ac} 
 the left-hand-side of \eqref{sign} reads
$$
g_\cI\left(\widetilde{\la v_0 \ra} (\vartheta, I) \right)^2\,  \left[  \pa_\vartheta \widetilde{\la v_0 \ra} (\vartheta, I) \right]^2  \ . 
$$
By Lemma \ref{lem:aa},  
 $\widetilde{\la v_0 \ra}(\vartheta,I)\equiv  \widetilde{\la v_0 \ra}(\vartheta,1)$ for any $I >\frac12$.
Moreover, as  $ \widetilde{\la v_0 \ra}(\vartheta):=  \widetilde{\la v_0 \ra}(\vartheta,1)$
 is a smooth function defined on $\T$, 
Sard's theorem implies that the set $\cC:= \{\vartheta \in \T \colon \pa_\vartheta\widetilde{\la v_0 \ra}(\vartheta) = 0 \}$ of critical values of 
$\widetilde{\la v_0 \ra}$ has image $\widetilde{\la v_0 \ra}(\cC)$ of zero measure. 
Since the image of $\widetilde{\la v_0 \ra}(\cdot) \equiv 
\la v_0\ra\vert_{x^2+\xi^2 = 2} = 
\la v_0\ra\vert_{x^2+\xi^2 = 1}$ is the nontrivial interval $\cI_0$  (see \eqref{I0}), 
$\cI_0 \setminus \widetilde{\la v_0 \ra}(\cC)$ is a positive measure set and contains only  regular values.
 So fix    $\und\lambda \in \cI_0 \setminus \widetilde{\la v_0 \ra}(\cC)$.
The set  $\widetilde{\la v_0 \ra}^{-1}(\und\lambda)$ is a compact set of isolated points, thus finitely many;  denote them by  $\{\vartheta_a\}_{a=1}^d$. 
 Since $\pa_\vartheta \widetilde{\la v_0 \ra}(\vartheta_a) \neq 0$ for any $ a= 1, \ldots, d$, we
  can find 
neighbors $ \cU_a \subset \T$ of $\vartheta_a$ and a neighbor $\cV \subset \cI_0$ of $\und \lambda$ such that 
\begin{itemize}
\item[(i)] $ \widetilde{\la v_0 \ra}\colon \cU_a \to \cV$ is a diffeomorphism for any $a = 1, \ldots, d$,
\item[(ii)]  $ \widetilde{\la v_0 \ra}^{-1}(\cV) = \cup_a \cU_a$,
\item[(iii)] there exists $\rho >0$ such that $\min\limits_{\vartheta \in \cup_a \bar \cU_a } \abs{ \pa_\vartheta \widetilde{\la v_0 \ra}(\vartheta)} \geq \sqrt{ \rho} $ . 
\end{itemize}
Now take an interval $\cI \subset \cV$ with $\und \lambda \in \cI$. 
Take also $g_\cI \in C^\infty_c(\R,\R_{\geq 0})$ with $g_\cI \equiv 1$ on $\cI$ and $\textup{supp } g_\cI \subset \cV$.
Using (ii) above, we have that
\begin{equation}
\label{}
(\vartheta,I) \in \textup{supp }\left( g_\cI \big(\widetilde{\la v_0 \ra}(\vartheta,I)  \big)
\right)  \cap \{ I > \frac12  \} \ \ \Rightarrow \ \ \vartheta \in  \bar{ \bigcup_{a = 1}^d \cU_a } \ .
\end{equation}
In particular in the set
$ \textup{supp}\left( g_\cI \big(\widetilde{\la v_0 \ra}(\vartheta,I)  \big)
\right)  \cap \{ I > \frac12  \}$ 
 the function $\widetilde{\la v_0 \ra} (\vartheta, I) \equiv  \widetilde{\la v_0 \ra} (\vartheta)$ fulfills (iii) above.
We 
 deduce that 
$$
g_\cI^2\left(\widetilde{\la v_0 \ra} (\vartheta, I) \right)\,  \left(  \pa_\vartheta \widetilde{\la v_0 \ra} (\vartheta, I) \right)^2 \,  \geq \rho \ g_\cI^2\left(\widetilde{\la v_0 \ra} (\vartheta, I) \right)   \ , 
\quad
\forall (\vartheta, I) \in \T \times \{ I > \frac12 \} \ , 
$$
proving \eqref{sign}.
\end{proof}

We can finally prove Proposition  \ref{lem:har.mourre2}.

\begin{proof}[Proof of Proposition \ref{lem:har.mourre2}]
By Lemmata \ref{lem:Vmourre1} and \ref{lem:Vmourre2}, we have 
\begin{equation}
\label{mourr.30}
\la g_{\cI}(\fV_0) \, \im [ \fV_0, A] \, g_{\cI}( \fV_0)  u, u \ra \geq 
\rho  \la \Opw{g_\cI(\la v_0\ra)^2}  u, u \ra + \la (\fK - C H_0^{-2}) u, u \ra  
\end{equation}
with $\fK$ compact. 
Now by symbolic and functional calculus
\begin{align*}
\Opw{g_\cI(\la v_0\ra)^2} 
&= \left( \Opw{g_\cI(\la v_0 \ra)}\right)^2 + \cS^{-1} 
\stackrel{\eqref{g.dec}}{=} g_\cI(\fV_0)^2 + \fK_1
\end{align*}
with $\fK_1$ a compact operator.
 Inserting in \eqref{mourr.30} gives the claimed Mourre estimate \eqref{mourre.ver}.
\end{proof}

\section{Dynamics of the effective equation}	\label{sec:effective}
	In this section we  consider the effective equation obtained removing $R_N(t)$ from \eqref{res.eq}, namely 
\begin{equation}
\label{eq.H}
\im \pa_t \vf = H_N \vf, \qquad H_N := \Opw{\la v_0\ra} +   T_N,
\end{equation}
with $T_N \in \cS^{-\delta}$, $\delta >0$, see Proposition \ref{prop:rpdnf}.
Recall that $H_N$ is selfadjoint and time independent.
We shall construct a solution of  \eqref{eq.H} with {\em decaying  negative Sobolev norms}, and thus, exploiting the $L^2$ conservation, also with   growing Sobolev norms.

\begin{proposition}[Decay of negative Sobolev norms]
\label{prop:decay}
Consider
the  operator $H_N $  in \eqref{eq.H}.
For any  $ k  \in \N$, 
there exist a nontrivial solution $\vf(t) \in  \cH^k$ of \eqref{eq.H}   and   $\forall r \in [0, k]$ a constant $C_r >0$ such that 
\begin{equation}
\label{decay}
 \norm{ \vf(t)}_{{-r}}  \leq C_r \la t \ra^{-r} \,  \norm{\vf(0)}_r  \ , \qquad \forall t \in \R \ . 
\end{equation}
\end{proposition}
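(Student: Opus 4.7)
The plan is to combine the Mourre estimate of Proposition \ref{lem:har.mourre2} with the Sigal--Soffer theory of local energy decay, together with a quantitative comparison between $\la A\ra$ and $H_0$ provided by the sharp G\r{a}rding inequality.

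The first task is to transfer the Mourre estimate from $\fV_0$ to $H_N = \fV_0 + T_N$. Since $T_N \in \cS^{-\delta}$ is compact on $L^2$ by Theorem \ref{thm:sym.cal}(iv), Weyl's theorem gives $\sigma_{\ess}(H_N) \supseteq \cI_0$; symbolic calculus shows $[H_N,A] - [\fV_0,A] = [T_N,A] \in \cS^{-\delta}$ is also compact, and Theorem \ref{lem:fc}(ii) yields compactness of $g_\cI(H_N) - g_\cI(\fV_0)$. Substituting into \eqref{mourre.ver} produces
$$
g_\cI(H_N) \, \im[H_N,A] \, g_\cI(H_N) \geq \rho \, g_\cI(H_N)^2 + \fK_N
$$
with $\fK_N$ compact. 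Shrinking $\cI$ about a regular value of $\wt{\la v_0\ra}$ in the interior of $\cI_0$ and invoking the virial theorem to rule out eigenvalues of $H_N$ in the smaller interval, one passes to a \emph{strict} Mourre estimate with no compact remainder. Since iterated commutators of $H_N$ with $A \in \cS^1$ remain in the calculus, $H_N$ is of class $C^\infty(A)$ in the Mourre sense, so the abstract Mourre--Sigal--Soffer theory of \cite{Mourre,SS} applies and yields, for every $r \in [0,k]$, the local energy decay
$$
\norm{\la A\ra^{-r} \, e^{-\im t H_N} \, g_\cI(H_N) \, \la A\ra^{-r}}_{\cL(L^2)} \leq C_r \la t\ra^{-r}, \qquad \forall t \in \R.
$$

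I would then choose the initial datum $\vf(0) := g_\cI(H_N)\chi$ with $\chi \in \cS(\R)$ a Schwartz function selected so that $g_\cI(H_N)\chi \neq 0$ (possible since $\cI \cap \sigma(H_N) \neq \emptyset$); by functional calculus this $\vf(0)$ lies in $\cH^r$ for every $r \geq 0$. The final ingredient is the comparison between $\la A\ra$ and $H_0$: the principal symbol $a = \{\la v_0\ra, h_0\} h_0$ of $A$ satisfies $|a| \leq C h_0$ since $\{\la v_0\ra,h_0\}\in S^0$ is bounded, so Theorem \ref{thm:garding} applied to the non-negative symbol $C^2 h_0^2 - a^2$ gives $A^2 \leq C^2 H_0^2 + C'$ as self-adjoint operators; operator monotonicity of the square root then yields $\la A\ra \leq C'' H_0$. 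Inverting on the ranges produces
$$
\norm{H_0^{-r} u} \leq C \norm{\la A\ra^{-r} u}, \qquad \norm{\la A\ra^r u} \leq C \norm{u}_r,
$$
valid for integer $r$ and extended by interpolation. Chaining these two estimates with the local decay bound gives
$$
\norm{\vf(t)}_{-r} \leq C \norm{\la A\ra^{-r} \vf(t)} \leq C' \la t\ra^{-r} \norm{\la A\ra^r \vf(0)} \leq C'' \la t\ra^{-r}\norm{\vf(0)}_r,
$$
which is \eqref{decay}.

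The main obstacle will be extracting the \emph{sharp} polynomial rate $\la t\ra^{-r}$ rather than the softer $\la t\ra^{-r+\varepsilon}$ that basic Mourre theory delivers; this requires either a careful quantitative application of the minimal velocity estimates of \cite{SS} exploiting $C^m(A)$-regularity of $H_N$ for $m$ sufficiently large (automatic in the pseudodifferential calculus, but one has to track the dependence on $r$), or a direct pseudodifferential argument adapted from \cite{Mas21} in the present symbolic setting. A secondary technical point is that $A$ is self-adjoint but not positive, so the comparison with $H_0$ has to be carried out at the level of the squares and then lifted through functional calculus rather than directly.
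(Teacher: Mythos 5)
Your architecture is the same as the paper's: transfer the Mourre estimate from $\fV_0$ to $H_N=\fV_0+T_N$ by compactness of $T_N$, $[T_N,A]$ and $g_\cI(H_N)-g_\cI(\fV_0)$, shrink the interval to absorb the compact remainder and get the strict estimate, apply Sigal--Soffer local energy decay with weights $\la A\ra^{\pm k}$, convert these weights into Sobolev norms, and pick a nontrivial datum in the range of $g(H_N)$ using $\sigma_{ess}(H_N)\supseteq\cI_0$. However, the weight-conversion step fails as you wrote it. From the single operator inequality $\la A\ra\leq C''H_0$ you cannot deduce $\norm{\la A\ra^{r}u}\leq C\norm{u}_r$ or $\norm{H_0^{-r}u}\leq C\norm{\la A\ra^{-r}u}$ for integer $r\geq 2$: these are equivalent to operator inequalities for the powers, and $t\mapsto t^{p}$ is operator monotone only for $0\leq p\leq 1$ (L\"owner), so "valid for integer $r$" does not follow and there is then nothing to interpolate from. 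In addition, your G\r{a}rding step is applied to $C^2h_0^2-a^2\in S^2$, which is outside the hypotheses of Theorem \ref{thm:garding} as stated (it requires an $S^0$ symbol), and it is in any case unnecessary. The correct and simpler justification, which is what the paper uses, is purely pseudodifferential: $\la A\ra^{k}H_0^{-k}$ and $H_0^{-k}\la A\ra^{k}$ belong to $\cS^0$, hence are bounded on $L^2$, which immediately turns the Sigal--Soffer estimate into $\norm{e^{-\im tH_N}g_\cJ(H_N)\vf}_{-k}\leq C\la t\ra^{-k}\norm{g_\cJ(H_N)\vf}_{k}$; the intermediate $r\in[0,k]$ are then obtained by interpolating this with the $L^2$ conservation $\norm{e^{-\im tH_N}\vf}_0=\norm{\vf}_0$, not by interpolating weight comparisons.

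Two smaller remarks. First, your closing worry about extracting the sharp rate $\la t\ra^{-r}$ is already settled by Theorem \ref{thm:SS}, which is part of the paper's toolkit and gives exactly the rate $\la t\ra^{-k}$ once ${\rm ad}^n_A(H_N)$, $n\leq 4k+2$, are bounded; this holds for every $n$ since $H_N\in\cS^0$, $A\in\cS^1$ and each commutator stays in $\cS^0$, so no further minimal-velocity analysis is needed. Second, your nontriviality argument is fine in substance, but to see that $g_\cI(H_N)\chi$ can be chosen nonzero you should argue as in the paper: $\cJ\subset\cI_0\subseteq\sigma_{ess}(H_N)$ (Weyl), hence $E_\cJ(H_N)\neq 0$, and $E_\cJ(H_N)=E_\cJ(H_N)g_\cJ(H_N)$ forces $g_\cJ(H_N)\neq 0$; density of $\cH^k$ in $L^2$ then provides the datum in $\cH^k$.
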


\begin{remark}
\label{rem:g}
 As $H_N$ is selfadjoint,  the conservation of the $L^2$-norm and 
Cauchy-Schwarz inequality give 
$$
\norm{\vf(0)}_0^2 = \norm{\vf(t)}_{0}^2 \leq \norm{\vf(t)}_{r} \ \norm{\vf(t)}_{{-r}}  \ , \qquad \forall t \in \R \ ,
$$
so  \eqref{decay} implies the growth of positive Sobolev norms:  
\begin{equation}
\label{low.vf}
\norm{\vf(t)}_r \geq  \frac{1}{C_r} \frac{\norm{\vf(0)}_0^2}{\norm{\vf(0)}_r} \, \la t \ra^{r}  \ , \quad  \forall t \in \R \ . 
\end{equation}
\end{remark}

Proposition \ref{prop:decay} will follow from the following abstract Sigal-Soffer local energy decay estimate:

\begin{theorem}[Local energy decay estimate]
\label{thm:SS}
Let $(\cH, \norm{\cdot}_\cH)$ be a Hilbert space. 
Let $\fH \in \cL(\cH)$ and $\fA$ be both selfadjoint 
and with 
$D(\fA)\cap \cH$  dense in $\cH$. 
Fix $k \in \N$ and 
assume that 
\begin{itemize}
\item[(M1)] \label{M1}  	the operators ${\rm ad}^n_\fA(\fH)$,  $ n = 1, \ldots,  4k +2$, 
can all be extended to bounded operators on $\cH$.
\item[(M2)] {\em Strict Mourre estimate}: there exist an open interval $\cI\subset \R$ with compact closure and a function
$g_\cI \in C^\infty_c(\R, \R_{\geq 0})$ with  $g_\cI \equiv 1$ on $\cI$ such that 
\begin{equation}
g_\cI(\fH) \, \im [\fH, \fA] \, g_\cI(\fH)  \geq \theta g_\cI(\fH)^2  
\end{equation}
for some $\theta >0$.
\end{itemize}
Then for any interval $\cJ \subset \cI$, any function $g_\cJ \in C^\infty_c(\R, \R_{\geq 0})$ with ${\rm supp }\, g_\cJ \subset \cI$, $g_\cJ = 1$ on $\cJ$,  there exists $C >0$ such that 
\begin{equation}
\label{SS}
\norm{ \la \fA \ra^{-k} \, e^{- \im \fH t} \, g_\cJ(\fH) \, \psi}_\cH \leq C \la t \ra^{-k} \norm{ \la \fA \ra^{k}\, g_\cJ(\fH) \psi}_\cH  , \quad \forall t \in \R  \ .
\end{equation}
\end{theorem}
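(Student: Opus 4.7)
The plan is to reduce the decay estimate \eqref{SS} to a weighted limiting absorption principle (LAP) for the resolvent of $\fH$ on $\cI$, and then to extract the $\la t\ra^{-k}$ factor by $k$-fold integration by parts in Stone's formula. This is the classical Mourre--Sigal--Soffer strategy, where (M2) drives the LAP and (M1) provides the iterated commutators needed to push the weights up to order $k$.

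The first and most substantial step is to prove the weighted LAP: for every interval $\cJ$ with $\bar{\cJ}\subset\cI$ and every cutoff $g_\cJ\in C^\infty_c(\cI,\R_{\geq 0})$ with $g_\cJ\equiv 1$ on $\cJ$,
\begin{equation}\label{plan-LAP}
\sup_{\lambda\in\R,\ 0<\mu\leq 1}\,\Big\|\la\fA\ra^{-k}\, g_\cJ(\fH)\, (\fH-\lambda\mp\im\mu)^{-(j+1)}\, g_\cJ(\fH)\, \la\fA\ra^{-k}\Big\|_{\cL(\cH)} < \infty,\qquad 0\leq j\leq k.
\end{equation}
I would establish this by the differential-inequality method of Mourre: introduce the regularized resolvent $G_\mu(z):=(\fH-z-\im\mu\,g_\cI(\fH)\,\im[\fH,\fA]\,g_\cI(\fH))^{-1}$, use (M2) to derive a uniform bound for $\la\fA\ra^{-1/2}G_\mu(z)\la\fA\ra^{-1/2}$ when $\Re z\in\cJ$, transfer it to $(\fH-z)^{-1}$ via a resolvent identity in $\mu$, and then iteratively upgrade both the weight exponent and the number of resolvent factors by commuting $\la\fA\ra^s$ through $(\fH-z)^{-1}$ and through $g_\cJ(\fH)$. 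Each upgrade consumes one of the bounded commutators ${\rm ad}_\fA^n(\fH)$ granted by (M1); the count $4k+2$ is precisely what is needed to reach weight exponent $k$ on both sides together with $k$ $\lambda$-derivatives of the resolvent. Commutators involving $g_\cJ(\fH)$ are handled by expanding via the Helffer--Sj\"ostrand formula \eqref{f(T)}, which reduces them to commutators with resolvents of $\fH$.

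With \eqref{plan-LAP} in hand, I would invoke Stone's formula
\begin{equation}
g_\cJ(\fH)^2\, e^{-\im t\fH}\,\psi = \frac{1}{2\pi\im}\lim_{\mu\downarrow 0}\int_\R g_\cJ(\lambda)^2\, e^{-\im \lambda t}\big[(\fH-\lambda-\im\mu)^{-1} - (\fH-\lambda+\im\mu)^{-1}\big]\psi\,\di\lambda,
\end{equation}
and integrate by parts $k$ times in $\lambda$. Because $g_\cJ$ is compactly supported in $\cI$, no boundary terms arise; each integration produces a factor $1/(\im t)$ and shifts a $\lambda$-derivative onto the product $g_\cJ(\lambda)^2[(\fH-\lambda-\im 0)^{-1}-(\fH-\lambda+\im 0)^{-1}]$, which by \eqref{plan-LAP} remains bounded as a map from the $\la\fA\ra^k$-weighted space into its dual. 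Sandwiching between $\la\fA\ra^{-k}$ on the left and $\la\fA\ra^{k}$ on the right and using that $g_\cJ(\fH)$ commutes with $e^{-\im t\fH}$ and can be absorbed into the right-hand side then yields \eqref{SS}.

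The hard part is the commutator bookkeeping inside the LAP step: the weight $\la\fA\ra^k$ is not polynomial in $\fA$ and $g_\cJ(\fH)$ is not polynomial in $\fH$, so every commutation must be performed via a Helffer--Sj\"ostrand expansion for $g_\cJ(\fH)$ together with a Taylor-type expansion of $\la\fA\ra^k$ whose remainder is controlled by ${\rm ad}_\fA^n(\fH)$ for $n$ as large as $4k+2$. Once these expansions are laid out, the iteration is lengthy but routine and follows the scheme of Sigal--Soffer \cite{SS} almost verbatim; in practice the statement is quoted directly from there or from the abstract framework of \cite{ABG}.
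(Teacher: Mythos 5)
Your plan rests on the weighted limiting absorption principle you state as the first step: uniform bounds on $\la\fA\ra^{-k}\,g_\cJ(\fH)(\fH-\lambda\mp\im\mu)^{-(j+1)}g_\cJ(\fH)\la\fA\ra^{-k}$ for all $0\le j\le k$. This is too strong, and it is exactly where the argument breaks. The multiple-commutator theory you invoke (Mourre's differential inequality upgraded as in Jensen--Mourre--Perry \cite{JenMouPer}) gives uniform bounds on $\la\fA\ra^{-s}(\fH-\lambda\mp\im\mu)^{-(j+1)}\la\fA\ra^{-s}$ only for $s>j+\frac12$, and this threshold is sharp: already for the free Laplacian with $\fA$ the generator of dilations (or $\la x\ra$-weights), the $(j+1)$-fold boundary resolvent sandwiched by weights of exponent $j$ is unbounded. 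Inserting the cutoffs $g_\cJ(\fH)$ does not change this scaling. Consequently, with weight exponent exactly $k$ you can only take about $k-1$ derivatives of the boundary resolvent (plus a H\"older gain of $\tfrac12$), so the Stone-formula integration by parts yields at best a decay $\la t\ra^{-(k-\frac12)}$, not the $\la t\ra^{-k}$ asserted in \eqref{SS}. The sharp integer decay with integer weight is precisely the delicate point of the theorem, and it is not reachable by the ``LAP + integrate by parts $k$ times'' scheme as you have set it up; the bookkeeping claim that $4k+2$ commutators ``is precisely what is needed'' is asserted rather than derived, but the real obstruction is this half-power mismatch.

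For comparison: the paper does not prove Theorem \ref{thm:SS} at all; it quotes it, citing \cite{SS,Ski,JenMouPer} and pointing to Appendix C of \cite{Mas21} for a proof of this exact statement. The proof there (following Sigal--Soffer and Hunziker--Sigal--Soffer) is time-dependent: one works with propagation observables built from $\fA/t$ and proves differential inequalities / minimal escape velocity estimates, which deliver $\la t\ra^{-k}$ decay with weights $\la\fA\ra^{\pm k}$ directly, consuming the iterated commutators ${\rm ad}^n_\fA(\fH)$ in the process. If you want to keep a stationary (resolvent-based) route, you would need more than the symmetric weighted LAP: e.g.\ incoming/outgoing resolvent estimates microlocalized in the spectral representation of $\fA$, or an interpolation argument recovering the missing half power. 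As written, your proposal does not contain either ingredient, so it proves a weaker decay than the statement requires.
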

The  theorem goes back to the works of \cite{SS,Ski, JenMouPer}, see also \cite{HunSigSof,Bach99,DG,GerSig,GNRS} for extensions and generalizations.
A  proof of this exact statement can be found in \cite{Mas21}, Appendix C.

\begin{proof}[Proof of  Proposition \ref{prop:decay}]
We apply Theorem \ref{thm:SS} with $\cH = L^2$, $\fH = H_N$ in \eqref{eq.H} and $\fA = A$ in \eqref{def:A}.
 Assumption (M1) is verified since $A \in \cS^1$ and $H_N \in \cS^0$. 
 To check (M2) 
we   work perturbatively from the Mourre estimate \eqref{mourre.ver}.
Again we shall denote by $\fK$ a general compact operator in $L^2$ which might change from line to line. 
To shorten notation, we shall also write $\fV_0:= \Opw{\la v_0\ra}$.

 First, as $H_N =\fV_0 + T_N$ with $T_N \in \cS^{-\delta}$, the operator
 $[T_N, A] \in \cS^{-\delta}$ is compact, so from \eqref{mourre.ver} we get 
  $$
 g_\cI(\fV_0) \,  \im [H_N, A] \, g_{\cI}(\fV_0) \geq \rho \, 
  g_\cI(\fV_0)^2 + \fK
 $$
 with $\fK$ compact.
Next, by Lemma \ref{lem:fc}, $g_\cI(H_N) - g_\cI(\fV_0)$ is compact and therefore we get that 
 \begin{align*}
  g_\cI(H_N) \,  \im [H_N, A] \, g_{\cI}(H_N) 
& =
    g_\cI(\fV_0) \,  \im [H_N, A] \, g_{\cI}(\fV_0) + \fK  \\
 &  \geq \rho \, 
  g_\cI(\fV_0)^2 + \fK  =  \rho \,  g_\cI(H_N)^2 + \fK
 \end{align*}
This proves that  $H_N$ fulfills a Mourre estimate over $\cI$.
 It is standard that one can shrink the  interval $\cI$ to a subinterval $\cI_1$ to obtain the strict Mourre estimate 
 \begin{equation}
\label{M3}
g_{\cI_1}(H_N) \, \im [H_N, A] \, g_{\cI_1}(H_N)  \geq \frac{\rho}{2} \,  g_{\cI_1}(H_N)^2   \ ,
\end{equation}
see e.g. the arguments in \cite{Mourre} (or also Step 1 of Lemma 3.13 of \cite{Mas21}).
This proves (M2). 
So we apply 
Theorem \ref{thm:SS} and obtain that for any interval $\cJ \subset \cI_1$, any   function $g_\cJ \in C^\infty_c(\R, \R_{\geq 0})$ with ${\rm supp }\, g_\cJ \subset \cI_1$, $g_\cJ = 1$ on $\cJ$, 
\begin{equation}
\label{SS.applied}
\norm{ \la A \ra^{-k} \, e^{- \im H_N t} \, g_\cJ(H_N) \, \vf}_0 \leq C \la t \ra^{-k} \norm{ \la A \ra^{k}\, g_\cJ(H_N) \vf}_0  , \quad \forall t \in \R .
\end{equation}
Then, since  $H_0^{-k} \la A \ra^{k}, \la A \ra^{k} H_0^{-k} \in \cS^0$, using \eqref{SS.applied} we deduce 
\begin{align*}
 \norm{ e^{- \im t H_N} g_\cJ(H_N) \vf}_{-k}
&\leq C_{k} \la t \ra^{-k} \norm{g_\cJ(H_N) \vf}_{k} \ , 
\end{align*}
and interpolating with 
 $\norm{  e^{- \im t H_N} \vf}_0 = \norm{\vf}_0$ yields
$$
 \norm{  e^{- \im t H_N} g_\cJ(H_N) \vf}_{-r}  \leq  C_r  \la t \ra^{-r} \norm{g_\cJ(H_N) \vf}_r \ , \quad \forall t \in \R \ , \ \ \ \forall  \vf \in \cH^r \  , \quad \forall r \in [0,k] \ .
$$
The last step is to prove that the estimate is not  trivial, namely  that one can choose   $\vf(0):= g_\cJ(H_N)\vf \in \cH^r\setminus\{0\}$. 
Regarding the regularity, note that  $g_\cJ(H_N)\vf \in \cH^r$ provided $\vf \in \cH^r$.
 Hence it suffices to show that  $g_\cJ(H_N) \cH^r \neq \{0\}$.
By the density of $\cH^r$ in $\cH\equiv L^2$, this follows provided $g_\cJ(H_N) \cH \neq \{0\}$.
So assume by contradiction that $g_\cJ(H_N) \cH = \{0\}$. 
By Weyl's theorem, being $T_N$ compact,
$$
\sigma(H_N) \supseteq \sigma_{ess}(H_N) = 
\sigma_{ess}(\fV_0)\stackrel{ \eqref{I0}}{\supseteq} \cI_0\supseteq \cJ \ .
$$
Hence  the spectral projector $E_\cJ(H_N)$ of $H_N$ over $\cJ$ fulfills $E_\cJ(H_N)\cH \neq \{0\}$. 
Since,  by functional calculus, $E_\cJ(H_N) = E_\cJ(H_N) g_\cJ(H_N)$
(which follows from   $\mathds{1}_{\cJ}(\lambda) = \mathds{1}_\cJ(\lambda) g_\cJ(\lambda)$ $\, \forall \lambda \in \R$, $\mathds{1}_{\cJ}$ being the indicator function over the interval $\cJ$), we get
$$
\{0 \} =  E_\cJ(H_N) g_\cJ(H_N)\cH =  E_\cJ(H_N) \cH \neq \{0\}
$$
obtaining a contradiction. Hence $g_\cJ(H_N) \cH \neq  \{0\}$.
\end{proof}

\section{Proof of the main  theorem}
In this section we prove Theorem  \ref{thm:main}.
\paragraph{Proof of $(i)$.}
It follows exactly as in \cite{Mas21}, so we just sketch the arguments for completeness. 
Let  $r >0$ be given in Theorem \ref{thm:main}. 
Fix $ N, k \in \N$ with $N \geq {2r +2}$ and $k \geq \delta +N-r$.
Take a solution
  $\vf(t)$ of equation \eqref{eq.H} fulfilling 
  the decay \eqref{decay} up to regularity $k$.
Defining   $U_N(t,s)$  the linear propagator of $H_N + R_N(t)$, one checks that
$$
\phi(t) := \vf(t) + \im  \int\limits_t^{+ \infty} U_N(t,s) \, R_N(s) \, \vf(s) \, \di s  =: \vf(t) + w(t)
$$
  solves equation  \eqref{res.eq}, provided 
  $w(t)$ is well defined. 
Using that,   by Theorem 1.5  of \cite{MaRo}, 
$U_N(t,s) \in \cL(\cH^r)$ with 
 $\norm{U_N(t,s)}_{\cL(\cH^r)} \leq C_r \, \la t-s \ra^{r} , \quad \forall t,s \in \R$ and  
 $R_N \in C^0(\T, \cS^{-N-\delta}) $, we get 
 \begin{align*}
 \norm{w(t)}_r 
 &
 \leq C  \int\limits_t^{+ \infty}   \la t-s \ra^r \norm{R_N(s) \, \vf(s)}_r \, \di s 
 \leq
 C  \int\limits_t^{+ \infty}   \la t-s \ra^r  \, \norm{\vf(s)}_{-(\delta+ N-r)} \, \di s\\
 & {\leq} C  \, \norm{\vf_0}_{\delta+ N - r}  \int\limits_t^{+ \infty}   \la t-s \ra^r  \, \frac{1}{\la s \ra^{\delta+ N-r}} \,  \, \di s
 \leq 
  C  \, \norm{\vf_0}_{k}   \la t \ra^{-1} \ .
 \end{align*}
Using also \eqref{low.vf} we deduce that  $\norm{\phi(t) }_r \geq \norm{\vf(t)}_r - \norm{w(t)}_r \geq C \la t \ra^r$ for $t$ sufficiently large.

Finally 
$\psi(t) = \cU_N(t)^{-1} \phi(t)$ solves \eqref{har.osc} and has  polynomially growing Sobolev norms as \eqref{thm:est},  proving item $(i)$.

\paragraph{Proof of $(ii)$.} We show that the set $\sV$ is  open and dense in $C^0_r(\T, S^0_{\cl})$.

\noindent\underline{Open:}  We show that for any  $v \in \sV$, there is 
 $\e, M >0$ such that any 
$ w \in C^0_r(\T, S^0_{\cl})$ fulfilling $\wp^{0,0}_M(v-w) <  \e$ belongs to $\sV$. In particular this last condition is achieved provided 
  $\td^{0,0}(v,w)$ is small enough.

First of all decompose   $v = v_0 + v_{-\mu}$ with $v_0$ the principal symbol of $v$ and $v_{-\mu} \in C^0_r(\T, S^{-\mu})$, $\mu >0$. 
As $\la v_0\ra$ is positively homogeneous of degree  0 (see \eqref{v0-hom}), we put
$$
\varrho := \max_{x^2 + \xi^2 \geq 1} \abs{ \{ \la v_0 \ra, h_0\}} = \max_{\vartheta \in \T} \abs{\pa_\vartheta \widetilde{\la v_0 \ra}(\vartheta,1)} >0 \ ;
$$
note that $\varrho$ is strictly positive since, by assumption,  $\{ \la v_0 \ra, h_0\} $ is not identically 0.
  Denote by $\vartheta_\varrho$ a point in $\T$ where the maximum is attained.

Decompose also $w = w_0 + w_{-\mu'}$ with $w_0$ the principal symbol
and $w_{-\mu'} \in C^0_r(\T, S^{-\mu'})$, $\mu' >0$.
To prove that $w \in \sV$, it suffices to show that $\{ \la w_0\ra, h_0\}$ is not identically zero in $x^2 + \xi^2 \geq  1$. 
We write
\begin{align}
\notag
\{ \la w_0\ra, h_0\}
&= \{ \la v_0 \ra, h_0\} + \{ \la w_0 - v_0 \ra, h_0\} \\
\label{openV2}
& =  \{ \la v_0 \ra, h_0\}
+ \{ \la w - v \ra, h_0\}  - \{ \la w_{-\mu'} - v_{-\mu}\ra, h_0 \}
\end{align}
We evaluate \eqref{openV2} at the point
\begin{equation}
\label{openV1}
\und{x}:= \sqrt{2\tR} \sin(\vartheta_\delta) , \quad \und{\xi}:= \sqrt{2\tR} \cos(\vartheta_\delta)
\end{equation}
where $\tR>0$ will be chosen later on sufficiently large.\\
By the very definition of $(\und x, \und \xi)$ we have
\begin{equation}
\label{openV3}
 \{ \la v_0 \ra, h_0\}(\und x, \und \xi) = \varrho \ .
\end{equation}
Next we consider the second term of \eqref{openV2}; by  symbolic calculus
\begin{equation}
\label{openV4}
\sup_{x,\xi \in \R}\abs{ \{ \la w - v \ra, h_0\}( x, \xi)  } \leq C \wp^{0,0}_M(w-v) \leq C \e   \ . 
\end{equation}
Finally let us consider $\{ \la w_{-\mu'} - v_{-\mu}\ra, h_0 \}$.
By  Remark \ref{rem:res.av} this is a symbol in $S^{-\bar \mu}$, $\bar \mu:= \min(\mu, \mu')$.
 By definition 
 there exists a constant $C_1 = C_1(w_{-\mu'}, v_{-\mu}) >0$ such that
$$
|\{ \la w_{-\mu'} - v_{-\mu}\ra, h_0 \}(x,\xi)|  \leq C_1 (1 + x^2 + \xi^2)^{-\bar \mu}  , \quad \forall (x, \xi) \in \R^2 \ . 
$$  
In particular, at the point $(\und x, \und \xi)$ in \eqref{openV1}, we get that
\begin{equation}
\label{openV5}
|\{ \la w_{-\mu'} - v_{-\mu}\ra, h_0 \}(\und x, \und \xi)|  \leq \frac{C_1}{(1+2\tR)^{\bar \mu}} \ . 
\end{equation}
Thus, evaluating \eqref{openV2} at the point $(\und x, \und \xi)$ and using \eqref{openV3}, \eqref{openV4} and \eqref{openV5} we get
$$
\{ \la w_0\ra, h_0\}(\und x, \und \xi) \geq \varrho - C \e - \frac{C_1}{(1+2\tR)^{\bar \mu}} \geq \frac{\varrho}{2} > 0
$$
provided one chooses $\e < \dfrac{\varrho}{4 C}$ and $\tR> \left(\dfrac{ C_1}{2\varrho}\right)^{\frac{1}{\bar \mu}}$, concluding the verification that $w \in \sV$.

\noindent\underline{Dense:} 
Take $v \in C^0_r(\T, S^0_{\cl})$ and assume that $v \not\in\sV$. Take $\e >0$ arbitrary. 
We shall construct $w \in  \sV$ with $\td^{0,0}(v,w) < \e$. 
Pick a function $\eta \in C^\infty_c(\R^2, \R_{\geq 0})$, radial, and with $\eta(x,\xi) = 1$ for $x^2  + \xi^2 \geq \frac12$ and 
$\eta(x,\xi) = 0$ for $x^2  + \xi^2 \leq \frac14$.
We put
\begin{equation}
\label{}
w:= v + \e_0 w_0 , \quad w_0(t,x,\xi):= \cos(2t)\, \eta(x,\xi)\,  \frac{x\xi}{x^2+\xi^2}
\end{equation}
with $\e_0>0$ small enough. Note that $w_0$ is positively homogeneous of degree 0, so $w \in C^0_r(\T, S^0_{\cl})$.
Let us show that $w \in \sV$.  Writing 
$w = v_0 + \e_0 w_0 + v_{-\mu}$, we have to check that 
$\{\la v_0\ra + \e_0 \la w_0 \ra , h_0\} \not\equiv 0$ in $x^2+\xi^2 \geq 1$. 
Since $v \not\in\sV$, we have that $\{\la v_0\ra  , h_0\} \equiv 0$ in $x^2+\xi^2 \geq 1$ and we need only to check that $ \{\la w_0 \ra , h_0\} \not\equiv 0$ on the same set.
Computing explicitly  $\la w_0\ra$ (using also that $\eta$ is radial and thus  constant along the flow $\phi^t$), we obtain 
\begin{equation}
\label{}
\la w_0 \ra = 
\frac{\eta(x,\xi)}{x^2+\xi^2} \cdot \frac{1}{2\pi} \int_0^{2\pi} \cos(2t) \, \big(x \cos t + \xi \sin t \big)\big(-x \sin t + \xi \cos t\big) \di t  = \frac{1}{2} \eta(x,\xi)\,  \frac{x\xi}{x^2+\xi^2} \ . 
\end{equation}
Then, using that $\{\dfrac{\eta(x,\xi)}{x^2+\xi^2}, h_0 \} = 0$, we get 
$$
\{ \la w_0 \ra, h_0 \} =\frac12 \{x\xi, h_0 \} \frac{\eta(x,\xi)}{x^2+\xi^2} = \frac12 (x^2 - \xi^2)  \frac{\eta(x,\xi)}{x^2+\xi^2} 
$$
proving that $w \in \sV$.
Finally we show that, provided $\e_0 >0$ is sufficiently small, 
$\td^{0,0}(v,w) < \e$. 
So take $N>0$ so large that $\sum_{j \geq N+1} 2^{-j} < \e/2$
and $\e_0$ so small that
$$\sum_{j = 0}^N \wp_j^{0,0}(v-w)  = \e_0 \sum_{j = 0 }^N \wp_j^{0,0}(w_0) < \frac{\e}{2} \ .$$
Then
$\td^{0,0}(v,w)   \leq \sum_{j = 0}^N \wp_j^{0,0}(v-w) + 
 \sum_{j \geq N+1} \frac{1}{2^{j}} <  \e $.
\qed

\vspace{1em}

We conclude this part with the following lemma, which somehow generalize the construction of the symbol $w$ in the previous proof.

\begin{lemma}\label{lem:ex1}
Let  $ \tv \in S^0_\cl $ be  real valued and so that $\{h_0, \tv \} \not\equiv 0$ in $x^2 + \xi^2 \geq 1$.
Then there exists $n \in \N$ such that 
\begin{equation}
\label{V.har0}
v(t,x, \xi):=  \cos( n t) \, \tv(x, \xi)  \in \sV \ . 
\end{equation}
In particular $\Opw{v(t, \cdot)}$  is a transporter for 
\eqref{har.osc}.
\end{lemma}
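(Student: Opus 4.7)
Decompose $\tv = \tv_0 + \tv_{-\mu}$ with $\tv_0$ positively homogeneous of degree $0$ and $\tv_{-\mu} \in S^{-\mu}$, $\mu>0$; then $v_0(t,x,\xi) := \cos(nt)\,\tv_0(x,\xi)$ is the principal symbol of $v(t,x,\xi) = \cos(nt)\,\tv(x,\xi)$. The plan is to select $n \in \N$ so that $\{\la v_0\ra, h_0\} \not\equiv 0$ on $x^2+\xi^2 \geq 1$, i.e.\ $v \in \sV$, after which the transporter claim follows directly from Theorem \ref{thm:main}(i).

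The main tool is the action-angle change of coordinates $\cA$ in \eqref{aa}. A direct computation shows that the harmonic flow satisfies $\phi^t\circ \cA(\vartheta, I) = \cA(\vartheta+t, I)$, and since $\tv_0$ is positively homogeneous of degree $0$, Lemma \ref{lem:aa} gives $\wt{\tv_0}(\vartheta, I) = \wt{\tv_0}(\vartheta)$ for $I \geq \tfrac12$. Using \eqref{pp.aa} with $\wt{h_0}=I$ we have $\wt{\{h_0, f\}} = \pa_\vartheta \wt f$; hence, at the level of principal symbols, the hypothesis $\{h_0,\tv\}\not\equiv 0$ on $x^2+\xi^2 \geq 1$ forces $\pa_\vartheta \wt{\tv_0} \not\equiv 0$, so $\wt{\tv_0}$ is a non-constant smooth function on $\T$.

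Expand $\wt{\tv_0}(\vartheta) = \sum_{k\in\Z} c_k\, e^{ik\vartheta}$ in Fourier series; since $\tv_0$ is real, $c_{-k}=\overline{c_k}$, and non-constancy yields some $n \geq 1$ with $c_n \neq 0$. Fix this $n$. For $I \geq \tfrac12$ the action-angle description of $\phi^t$ gives
\begin{equation*}
\wt{\la v_0\ra}(\vartheta, I)
= \frac{1}{2\pi}\int_0^{2\pi}\cos(nt)\,\wt{\tv_0}(\vartheta+t)\,dt
= \tfrac12\bigl(c_n\,e^{in\vartheta} + \overline{c_n}\,e^{-in\vartheta}\bigr)
= \Re\bigl(c_n\,e^{in\vartheta}\bigr),
\end{equation*}
since $\cos(nt)$ integrated against $e^{ikt}$ selects exactly the modes $k=\pm n$. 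Differentiating, $\pa_\vartheta \wt{\la v_0\ra} = -n\,\Im(c_n\, e^{in\vartheta}) \not\equiv 0$; translating back via $\wt{\{\la v_0\ra, h_0\}} = -\pa_\vartheta \wt{\la v_0\ra}$ yields $\{\la v_0\ra, h_0\} \not\equiv 0$ on $x^2+\xi^2 \geq 1$, so $v \in \sV$. An application of Theorem \ref{thm:main}(i) then concludes the proof.

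The only real computation is the Fourier selection, which is elementary; no substantive obstacle is anticipated beyond careful bookkeeping in the action-angle variables.
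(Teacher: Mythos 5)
Your core mechanism is exactly the paper's: the resonant average of $\cos(nt)\,\tv$ selects the $n$-th harmonic of $\tv$ along the harmonic flow, so one only needs to choose $n$ hitting a nonvanishing harmonic. The paper implements this by expanding $t\mapsto \tv(\phi^t(x,\xi))$ in a Fourier series in $t$, proving $\{h_0,\tv^+_n\}=n\,\tv^-_n$ by integration by parts and $\tv^-_n=\tv^+_n\circ\phi^{\pi/2n}$, and concluding by contradiction that not all harmonics with $n\geq1$ can vanish on $x^2+\xi^2\geq1$; you instead pass to action--angle variables, use homogeneity (Lemma \ref{lem:aa}) to reduce the principal symbol to a function of $\vartheta$ alone, and compute the average explicitly as $\Re\big(c_n e^{\im n\vartheta}\big)$. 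Since $\phi^t$ acts as the shift $\vartheta\mapsto\vartheta+t$, the two Fourier expansions coincide, so this is the same proof in different bookkeeping; your selection computation and the sign conventions coming from \eqref{pp.aa}, \eqref{pp.ac} are correct, and your version has the merit of verifying the $\sV$-condition \eqref{sV} literally on the principal symbol.

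The one step that does not follow as written is the assertion that the hypothesis $\{h_0,\tv\}\not\equiv0$ on $x^2+\xi^2\geq1$ ``forces'' $\pa_\vartheta\wt{\tv_0}\not\equiv0$. The hypothesis concerns the full symbol $\tv=\tv_0+\tv_{-\mu}$, and the nonvanishing of the bracket can be carried entirely by the lower-order part: for instance $\tv=1+x\xi\,(1+x^2+\xi^2)^{-2}$ has constant principal symbol, yet $\{h_0,\tv\}=(\xi^2-x^2)(1+x^2+\xi^2)^{-2}\not\equiv0$; in that case $\la v_0\ra$ is constant on $x^2+\xi^2\geq1$, so $\cos(nt)\tv\notin\sV$ for every $n$ and no argument can bridge the step. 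You should be aware, however, that this is a tension already present in the statement itself: the paper's own proof verifies $\{h_0,\la v\ra\}\not\equiv0$ for the resonant average of the \emph{full} symbol $\tv$, which matches the definition of $\sV$ (formulated through the principal symbol) only modulo identifying $\tv$ with its principal part. Under the evidently intended reading --- the hypothesis imposed on the principal symbol $\tv_0$, or $\tv$ itself positively homogeneous outside a ball --- your argument is complete; to be airtight you should either state that reading explicitly or, as the paper does, run the Fourier argument on $\tv$ itself and then restrict attention to its principal part.
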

\begin{proof}
We need to check that $\{h_0, \la v \ra \} \not\equiv 0$ in $x^2 + \xi^2 \geq 1$, where
$\la v \ra(x,\xi) = \frac{1}{2\pi}\int_0^{2\pi} \cos(nt) \, \tv(\phi^t(x,\xi)) \, \di t$.
The function $t \mapsto \tv(\phi^t(x,\xi))$ is $2\pi$-periodic in time; expanding it in Fourier series one gets
\begin{equation}
\label{tv.fou}
\tv(\phi^t(x,\xi)) = \tv^+_0(x, \xi) + 2  \sum_{n \geq 1} \big(\cos(nt)\, \tv^+_n(x,\xi) +  \sin(nt)\, \tv^-_n(x,\xi)\big)    
\end{equation}
where
$$
\tv^+_n(x, \xi) := \frac{1}{2\pi} \int\limits_0^{2\pi} \cos(nt) \,  \, \tv(\phi^t(x,\xi)) \, \di t , 
\quad
\tv^-_n(x, \xi) := \frac{1}{2\pi} \int\limits_0^{2\pi} \sin(nt) \, \tv(\phi^t(x,\xi)) \, \di t , \quad n \geq 0 . 
$$
They are all symbols positively  homogeneous of degree 0.
The claim is equivalent to verify that 
\begin{equation}
\label{tvn+1}
\exists \ n \geq 1 \colon \ \ \{h_0, \tv^+_n \} \not\equiv 0  \ \ \mbox{ in }  x^2 + \xi^2 \geq 1  \ . 
\end{equation}
First note that, by  integration by parts and the periodicity of the flow $\phi^t$, 
\begin{align}\notag
\{ h_0,  \tv^+_n \} & = 
\frac{1}{2\pi}\int\limits_0^{2\pi} \cos(nt) \, \{ h_0,  \tv\circ \phi^t \} \, \di t = 
\frac{1}{2\pi}\int\limits_0^{2\pi} \cos(nt) \, \left( \frac{\di }{\di t} \tv\circ \phi^t \right) \, \di t \\
\label{tvn+11}
& = \frac{n}{2\pi}\int\limits_0^{2\pi} \sin(nt) \,  ( \tv\circ \phi^t ) \, \di t = n\,  \tv^-_n  \, , 
\end{align}
so it is sufficient to check that $\exists n \geq 1$ so that $\tv^-_n \not\equiv 0$ in $x^2 + \xi^2 \geq 1$.
Actually, since 
\begin{align}\notag
\tv_n^-(x,\xi) & = 
\frac{1}{2\pi} \int_0^{2\pi} \sin(nt) \, \tv\big(\phi^t(x,\xi)\big) \, \di t  
= 
\frac{1}{2\pi} \int_0^{2\pi} \cos\big(n(t-\frac{\pi}{2n})\big) \, \tv\big(\phi^t(x,\xi)\big) \, \di t \\
\label{tvn+12}
&  = 
\frac{1}{2\pi} \int_0^{2\pi} \cos(nt) \, \tv\big(\phi^{t+\frac{\pi}{2n}}(x,\xi)\big) \, \di t  = \tv^+_n\big(\phi^{\frac{\pi}{2n}}(x,\xi) \big) , 
\end{align}
it is enough to check that $\exists n \geq 1$ so that  one among $\tv^\pm_n$ is not identically zero in $x^2 + \xi^2 \geq 1$; this is what we show next. 

Assume by contradiction that $\tv^\pm_n \equiv 0$ in $x^2 + \xi^2 \geq 1$ for any  $n \geq 1$; then  from \eqref{tv.fou} we get  $\tv\circ\phi^t = \tv^+_0$ for any  $ t \in \R$ and any $x^2 + \xi^2 \geq 1$. Then
$$
0 = \frac{\di}{\di t} \tv\circ \phi^t = \{ h_0, \tv  \} \circ \phi^t  \ , \quad \forall t  \in \R , \quad \forall x^2 + \xi^2 \geq 1 ; 
$$
so, at $t = 0$, one gets $ \{ h_0, \tv  \} \equiv 0$ in $x^2 + \xi^2 \geq 1$, contradicting the assumption. 

Then at least one couple of the   $\tv_n^\pm$ is not identically zero, 
and  \eqref{tvn+1} follows.
\end{proof}

\appendix

\section{Technical results}

\subsection{The strong G\r{a}rding inequality in $\cS^0$}\label{app:Garding}
Our proof involves the  Anti-Wick quantization of a symbol, which we now introduce.
First let us define  coherent states:
for $z = (q,p) \in \R^2$ let
\begin{equation}\label{def:vf}
\begin{aligned}
\Phi_0(x) := \frac{1}{\pi^{1/4}} \, e^{-\frac{x^2}{2}} , \qquad 
\Phi_z := \cT_z \Phi_0 , 
\qquad 
[\cT_z u](x):= e^{- \frac{\im}{2} pq} \, e^{ \im x p} \, u(x-q) \ . 
\end{aligned}
\end{equation}
Note that the operator $\cT_z$ is unitary in $L^2(\R)$.
Now, given a symbol $a \in S^0_\hos$, we define its Anti-Wick quantization by
\begin{equation}
\label{aw}
\left(\Opaw{a}u \right)(x) := \int_{\R^{2}} a(q,p) \, \la u, \Phi_{q,p} \ra\, \Phi_{q,p} \, \di q \, \di p \ .
\end{equation}
We collect few properties of the Anti-Wick quantization:
\begin{lemma}\label{lem:aw}
 Let $a \in S^0_\hos$. Then
\begin{itemize}
\item[(i)] If $a \geq 0$, then $\la \Opaw{a} u, u \ra \geq 0$ for any $u \in L^2$.
\item[(ii)] One has $\Opaw{a} = \Opw{a*\Phi_0}$, with $\Phi_0$ in \eqref{def:vf}. Moreover $a - a*\Phi_0 \in S^{m-2}_\hos$.
\end{itemize}
\end{lemma}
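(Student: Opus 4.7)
For item $(i)$, the strategy is to unfold the definition and exploit the self-adjoint structure of the rank-one projectors $|\Phi_{q,p}\rangle\langle \Phi_{q,p}|$. Pairing \eqref{aw} against $u$ and swapping order of integration gives
\begin{equation}
\la \Opaw{a} u, u \ra = \int_{\R^2} a(q,p) \, |\la u, \Phi_{q,p}\ra|^2 \, \di q \, \di p,
\end{equation}
which is manifestly non-negative when $a \geq 0$. This is essentially the whole content of $(i)$; no further work is needed.

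For item $(ii)$, the plan is to identify the Weyl symbol of the rank-one projector $|\Phi_z\ra\la \Phi_z|$ and then use the integral representation \eqref{aw}. Since $\cT_z$ in \eqref{def:vf} is, up to the phase, the phase-space translation $\Opw{e^{\im(p\cdot x - q \cdot \xi)}}$ (acting by Egorov-type conjugation), one checks by a direct computation that the Weyl symbol of $|\Phi_z\ra\la \Phi_z|$ is $2\pi \, W_{\Phi_0}(x-q, \xi-p)$, where $W_{\Phi_0}$ denotes the Wigner transform of $\Phi_0$. An explicit Gaussian integral then yields $W_{\Phi_0}(x,\xi) = \frac{1}{\pi} e^{-(x^2 + \xi^2)}$. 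Substituting into \eqref{aw} and applying Fubini gives the identity $\Opaw{a} = \Opw{a*G}$, where $G(x,\xi) = 2 W_{\Phi_0}(x,\xi)$ is the normalized Gaussian in phase space. In the lemma's notation (abusively) this is $\Phi_0$. The routine but slightly delicate point here is the bookkeeping of constants in the Wigner transform of $\Phi_0$; the rest is a computation with Gaussian integrals.

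The asymptotic $a - a*G \in S^{-2}_\hos$ is then obtained by Taylor-expanding $a(x-y, \xi-\eta)$ around $(x,\xi)$ inside the convolution. The zeroth-order term reproduces $a(x,\xi)$ since $G$ has unit mass, the first-order terms vanish by the even parity of $G$, and the quadratic contribution yields a constant multiple of $(\pa_x^2 + \pa_\xi^2)a(x,\xi)$, which belongs to $S^{-2}_\hos$. Controlling the integral remainder requires estimating $\pa_x^\alpha \pa_\xi^\beta a(x-y, \xi-\eta)$ against the Gaussian weight $G(y,\eta)$; using the elementary inequality $(1+(x-y)^2+(\xi-\eta)^2)^{-1} \leq C(1+x^2+\xi^2)^{-1}(1+y^2+\eta^2)$ together with the rapid decay of $G$, all remainder terms lie in $S^{-2}_\hos$ with seminorms bounded by those of $a$. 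The main technical obstacle is only this last uniform control of the remainder across all phase-space points; once in place, the two claims of the lemma follow.
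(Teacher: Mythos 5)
Your item $(i)$ is exactly the paper's argument (the paper simply says it follows from the definition), and for item $(ii)$ the paper gives no proof at all: it cites Theorem 24.1 of Shubin. Your route — identifying the Weyl symbol of the projector $\la \cdot, \Phi_z\ra \Phi_z$ as $2\pi W_{\Phi_0}(x-q,\xi-p)$ via the Wigner transform, deducing the convolution formula by Fubini, and then Taylor-expanding against the Gaussian — is essentially the textbook proof of that cited theorem, so it is a legitimate, self-contained alternative rather than a different idea; what it buys is that the appendix no longer outsources the key identity.

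Two concrete points need fixing. First, the constants: with \eqref{aw} as written (no $\tfrac{1}{2\pi}$ in front), substituting the symbol $2\pi W_{\Phi_0}(\cdot-z)$ of the projector gives $\Opaw{a}=\Opw{a*(2\pi W_{\Phi_0})}$, whose kernel $2e^{-(x^2+\xi^2)}$ has total mass $2\pi$; your $G=2W_{\Phi_0}$ has mass $2$, and neither is the ``normalized Gaussian'' you invoke. Unit mass is not a cosmetic issue here: it is exactly what makes the zeroth-order term of your Taylor expansion cancel $a$, so as written the asymptotic claim does not follow. The clean fix is to put the factor $\tfrac{1}{2\pi}$ into the definition of the anti-Wick quantization (as in Shubin, and as the paper implicitly intends — compare \eqref{weyl2} and the resolution of identity $\tfrac{1}{2\pi}\int \la u,\Phi_z\ra\Phi_z\,\di z=u$), after which the kernel is $W_{\Phi_0}=\pi^{-1}e^{-(x^2+\xi^2)}$ with unit mass; positivity in $(i)$ is insensitive to any positive constant. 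Second, the order bookkeeping: in the paper's convention (Definition 1.3) each derivative lowers the order by $\tfrac12$, so for $a\in S^0_\hos$ the leading correction $(\pa_x^2+\pa_\xi^2)a$ lies in $S^{-1}_\hos$, not $S^{-2}_\hos$; the ``$m-2$'' of the statement is Shubin's counting, where differentiation costs a full unit. So your argument, once the remainder is controlled as you indicate, proves $a-a*G\in S^{-1}_\hos$ in the paper's scale; you should either state it that way (it is still amply sufficient for the G\r{a}rding proof, where one then gets a lower bound $-C\norm{u}_{-1/2}^2$ and only compactness of the error is used downstream) or switch explicitly to Shubin's scale — but you cannot claim $\Delta a\in S^{-2}_\hos$ for a general $a\in S^0_\hos$.
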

\begin{proof}
Item $(i)$ follows directly from the  definition \eqref{aw}. Item $(ii)$ is   Theorem 24.1 of \cite{Shubin}.
\end{proof}

\begin{proof}[Proof of Theorem \ref{thm:garding}]
Let $\chi\in C^\infty_c(\R^2, \R_{\geq 0})$, radial  cut-off function with
$$
\chi(x,\xi) = 1  \ \ \forall x^2 + \xi^2 \leq R , \qquad
\chi(x,\xi) = 0 \ \ \forall x^2 + \xi^2 \geq R+1 \ .
$$
Then the function $a_\chi:= a(1-\chi) \in S^0_\hos$ fulfills, using the assumption \eqref{garding.a.ass},
\begin{equation}
\label{achi}
a_\chi(x,\xi) \geq 0 \quad \forall x,\xi \in \R \ . 
\end{equation}
Next we write
\begin{align*}
\Opw{a_\chi}  
& = \Opw{a_\chi * \Phi_0} + \Opw{a_\chi - a_\chi*\Phi_0}\\
& = \Opaw{a_\chi} +   \Opw{a_\chi - a_\chi*\Phi_0} 
\end{align*}
where to pass from the first to the second line we used Lemma \ref{lem:aw} $(ii)$. By Lemma \ref{lem:aw} and \eqref{achi} one has   $ \Opaw{a_\chi} \geq 0$, hence 
$$
\la \Opw{a_\chi} u , u \ra \geq \la  \Opw{a_\chi - a_\chi*\Phi_0}  u, u \ra \ .
$$
Now use that $a_\chi = a - a\chi$ to deduce
$$
\la \Opw{a} u , u \ra \geq \la  \Opw{a_\chi - a_\chi*\Phi_0}  u, u \ra  + \la \Opw{a\chi} u , u \ra . 
$$
By Lemma \ref{lem:aw} $(ii)$ the symbol $a_\chi - a_\chi*\Phi_0 \in S^{-2}_\hos$ and moreover $a\chi \in S^{-\infty}_\hos$. 
Then Theorem \ref{thm:sym.cal} $(i)$  implies the claimed bound 
\eqref{sGarding}.
\end{proof}

\subsection{Proof of Theorem \ref{thm:spectrum_ho} }\label{app:spectrum}

Denote by $\sA$ the set on the right of \eqref{spectrum}.
 We first show that $\sigma_{ess}(\Opw{v}) \subseteq \sA$.
 Assume by contradiction that
 $\lambda \in \sigma_{ess}(\Opw{v})$ does not belong to 
 $\sA$. 
 Then there exist $c, R >0$ such that $|v(x,\xi)-  \lambda|\geq c$
 for any $x^2+\xi^2 \geq R$.
Let $\chi \in C^\infty_c(\R^2)$ with $\chi \equiv 1$ in $x^2+\xi^2 \leq R$ and $\chi \equiv 0$ in $x^2+\xi^2 \geq R+1$. 
Put 
  $b(x,\xi) = \dfrac{1-\chi(x,\xi)}{v(x,\xi) -\lambda} \in S^0_\hos$.
Then by symbolic calculus there are operators $K_1, K_2 \in \cS^{-1}$ (and therefore compact) so that 
$$
\left( \Opw{v} - \lambda \right) \, \Opw{b} = \textup{Id} + K_1 , \qquad
\Opw{b}\, \left( \Opw{v} - \lambda \right)  = \textup{Id} + K_2  \ . 
$$
Thus  $ \Opw{v} - \lambda $ is a Fredholm operator, and its 
  spectrum in a neighbourhood of zero must be discrete. 
  In particular $\lambda \not\in \sigma_{ess}(\Opw{v})$, contradicting the assumption. 
This shows that $\sigma_{ess}(\Opw{v}) \subseteq \sA$.

We show now the inverse inclusion  $\sA \subseteq \sigma_{ess}(\Opw{v})$. Given $\lambda \in \sA$, we shall 
 exhibit a Weyl's sequence for $\lambda$, i.e. a sequence  of functions $\{\varphi_j \}_{j \geq 1}\in L^2$ with  $\norm{\varphi_j} = 1$, $\varphi_j \rightharpoonup 0$ and $\norm{(\Opw{v}- \lambda) \varphi_j} \to 0$; then 
Weyl's theorem guarantees that
$\lambda \in \sigma_{ess}(\Opw{v})$.
We construct such a  Weyl's sequence using  the coherent states in \eqref{def:vf}.
If  $\lambda \in \sA$  we have a  sequence $z_j:=(q_j, p_j) \to \infty$ with $v(q_j, p_j) \to \lambda$. 
Put (see \eqref{def:vf})
$$
\Phi_j := \Phi_{z_j} = \cT_{z_j} \Phi_0  \ .
$$
We claim that, up to subsequences, $\{\Phi_j\}_{j \geq 1}$ is a Weyl sequence for $\lambda$. 
It is clear that $\norm{\Phi_j} = 1$  $\forall j$. 
It is not difficult to show that  $\Phi_j \rightharpoonup 0$; for completeness we prove this in Lemma \ref{lem:vf.to0} below. 
We show  now that $\norm{(\Opw{v}- \lambda) \Phi_j} \to 0$.
First we have, using the reality of $\lambda$, $v$ and symbolic calculus, 
\begin{align*}
\Opw{v-\lambda}^* \Opw{v- \lambda} = \Opw{(v-\lambda)^2 + r_{-1}} , \quad r_{-1} \in S^{-1}  \ .
\end{align*}
 So we write
\begin{align*}
\norm{(\Opw{v}- \lambda) \Phi_j}^2
&  = 
{\la  \Opw{(v-\lambda)^2} \Phi_j, \Phi_j \ra} 
+
\la \Opw{r_{-1}} \Phi_j, \Phi_j \ra  \ .
\end{align*}
Since $\Opw{r_{-1}} \in \cS^{-1}$ is compact and $\Phi_j \rightharpoonup 0$, it follows that $\la \Opw{r_{-1}} \Phi_j, \Phi_j \ra  \to 0$ and therefore it  suffices to show   that
$\la  \Opw{(v-\lambda)^2} \Phi_j, \Phi_j \ra \stackrel{j \to \infty}{\to} 0$.
To estimate  this last term  we shall use the following identities:
\begin{align}
\label{weyl1}
& \cT_z^{-1} \, \Opw{a} \, \cT_z = \Opw{a(\cdot + z)}  \ , \\
\label{weyl2}
& \la \Opw{a} \Phi_0, \Phi_0 \ra = \frac{1}{\pi} \int_{\R^2} a(x,\xi) \, e^{-(x^2 +\xi^2) } \, \di x \, \di \xi .
\end{align}
They are both contained in the book \cite{CoRo}: the 
 first one is   formula (2.16), whereas the  second one follows  combining Proposition 14 and 16 (with $\hbar =1$) in chap. 2. \\
We deduce, using $\cT_{z_j}^* = \cT_{z_j}^{-1}$, 
\begin{align*}
\la \Opw{(v-\lambda)^2} \Phi_j, \Phi_j \ra 
& = \la \cT_{z_j}^{-1} \, \Opw{(v-\lambda)^2} \,\cT_{z_j} \Phi_0, \Phi_0 \ra \\
&  \stackrel{\eqref{weyl1}}{=}  
\la  \Opw{\left( v(\cdot + z_j) - \lambda \right)^2}  \Phi_0, \Phi_0 \ra  \\
& \stackrel{\eqref{weyl2}}{=}\pi^{-1} \int_{\R^2} \big( v(x + q_j,\xi+p_j) - \lambda \big)^2  \, e^{-(x^2 +\xi^2) } \, \di x \, \di \xi 
\end{align*}
The last integral converges to $0$ as $j \to \infty$ 
by Lebesgue's dominated convergence theorem, since $v \in S^0_\hos$ is bounded and  fulfills $v(x + q_j,\xi+p_j) \stackrel{j \to \infty}{\to} \lambda$ pointwise when $(q_j,p_j) \to \infty$.
\qed

\begin{lemma}\label{lem:vf.to0}
Let $z_j:=(q_j,p_j) \to \infty$. 
Then, up to a subsequence,   $\Phi_j:= \cT_{z_j} \Phi_0 \rightharpoonup 0$.
\end{lemma}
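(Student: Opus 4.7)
The plan is to combine the uniform $L^2$-bound $\|\Phi_j\|_{L^2}=1$ with reflexivity and density of $C^\infty_c(\R)$: by Banach--Alaoglu there is a subsequence $\Phi_{j_k}\rightharpoonup \Phi_\star$ in $L^2$, and to identify $\Phi_\star = 0$ it suffices to check $\la \Phi_{j_k}, u \ra \to 0$ for every $u \in C^\infty_c(\R)$. Throughout I would use the explicit formula
\[
\Phi_j(x) = \pi^{-1/4}\, e^{-\frac{\im}{2}p_j q_j}\, e^{\im x p_j}\, e^{-(x-q_j)^2/2}.
\]

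The key step is a case split on how $(q_j, p_j)$ goes to infinity: passing to a further subsequence, either (a) $|q_j|\to\infty$, or (b) $q_j\to q_\star\in\R$ and $|p_j|\to \infty$. In case (a), for $u$ supported in some fixed $[-M,M]$ a direct Cauchy--Schwarz bound
\[
\big|\la \Phi_j, u\ra\big| \leq \pi^{-1/4}\|u\|_{L^\infty}\int_{-M}^{M} e^{-(x-q_j)^2/2}\,\di x
\]
drives the overlap to zero as $|q_j|\to \infty$ by Gaussian decay, since the bulk of $\Phi_j$ is eventually disjoint from $\textup{supp}(u)$.

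Case (b) requires reading the oscillatory factor $e^{\im x p_j}$ as a Fourier transform. I would write
\[
\la \Phi_j, u\ra = e^{\frac{\im}{2}p_j q_j}\int_\R e^{-\im x p_j}\, \Phi_0(x-q_j)\, u(x)\,\di x,
\]
so that (up to a unimodular prefactor) $\la \Phi_j, u\ra$ equals the Fourier transform of $g_j(x):= \Phi_0(x-q_j)u(x)$ evaluated at $p_j$. Since $q_j\to q_\star$ and $\Phi_0$ is bounded and continuous, dominated convergence yields $g_j \to g_\star := \Phi_0(\cdot-q_\star)u$ in $L^1(\R)$. The contractivity $\|\widehat{\cdot}\|_{L^\infty}\leq \|\cdot\|_{L^1}$ then gives $|\widehat{g_j}(p_j)-\widehat{g_\star}(p_j)| \leq \|g_j - g_\star\|_{L^1} \to 0$, while the Riemann--Lebesgue lemma applied to the fixed $L^1$-function $g_\star$ forces $\widehat{g_\star}(p_j)\to 0$ as $|p_j|\to \infty$. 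Combining these two estimates finishes case (b), and hence shows $\Phi_\star = 0$.

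The main conceptual point is not any single hard estimate but rather recognizing that the two coordinates $q_j, p_j$ play dual roles: translation in physical space is handled by Gaussian decay, translation in frequency by Riemann--Lebesgue. The mild technical obstacle is the mixed regime (b), where the fact that $q_j$ varies forces one to first approximate in $L^1$ so that Riemann--Lebesgue can be applied to a single fixed function rather than to the moving family $g_j$.
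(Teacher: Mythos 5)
Your proof is correct, and it follows the same skeleton as the paper's argument: a dichotomy between the regime $|q_j|\to\infty$, handled by the Gaussian decay of $\Phi_0$, and the regime $q_j$ convergent with $|p_j|\to\infty$, handled by the Riemann--Lebesgue lemma. The implementation differs in a useful way, though. You first reduce weak convergence to testing against the dense set $C^\infty_c(\R)$; this is legitimate, and in fact the Banach--Alaoglu extraction is superfluous, since for a sequence bounded in $L^2$ the convergence $\la \Phi_j, u\ra \to 0$ for $u$ in a dense set already gives $\Phi_j \rightharpoonup 0$. The paper instead pairs $\Phi_j$ directly with an arbitrary $f\in L^2$, which is why in the mixed regime it must split the pairing into several pieces and invoke the continuity of translations in $L^2$ together with two applications of Riemann--Lebesgue. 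Your density reduction buys compact support and boundedness of the test function, so the moving center $q_j\to q_\star$ is absorbed by dominated convergence in $L^1$ and the bound $\|\widehat{g}\|_{L^\infty}\le\|g\|_{L^1}$, after which Riemann--Lebesgue is applied to the single fixed function $g_\star$; this is a slightly cleaner treatment of the same difficulty, at the price of the (easy) preliminary density argument, while the paper's route needs no such preamble. Two cosmetic remarks: your case-(a) estimate is an $L^\infty$--$L^1$ bound rather than Cauchy--Schwarz, and the repeated subsequence extractions are harmless because the lemma only asserts convergence up to a subsequence.
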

\begin{proof}
We distinguish  two cases:
$(i)$ the sequence $\{q_j\}_j$ is bounded and 
$(ii)$ up to a subsequence  $|q_j| \to \infty$.\\
In case $(i)$, up to a subsequence  we can assume $q_j \to  q_0 \in \R$ and $p_j \to \infty$. 
Take an arbitrary $f \in L^2(\R)$. We write 
  \begin{align}
  \label{vf.w0.1}
\int_\R f(x) \, \bar{\Phi_j(x)} \, \di x  = 
\underbrace{\int_\R f(x) \, \bar{\Phi_{(q_0, p_j)}(x)} \di x }_{=: \tI_1}
+
\underbrace{\int_\R f(x) \, \bar{\left(\Phi_{( q_j, p_j)}(x) -
\Phi_{( q_0, p_j)}(x) \right)} \di x }_{=:\tI_2}  \ .
 \end{align}
Using  Riemann-Lebesgue lemma
 \begin{equation}
 \label{vf.w0.2}
\tI_1   =  \frac{e^{\frac{\im}{2} p_j q_0}}{ \pi^{\frac14}} \int_\R f(x)  \, e^{-\frac{(x-q_0)^2}{2}} \,  e^{-\im x p_j} \di x \stackrel{p_j \to \infty}{\to} 0 \ .
 \end{equation}
To estimate the  second integral in \eqref{vf.w0.1} we write
\begin{align*}
\tI_2 & = \pi^{-\frac{1}{4}} \int_\R
e^{-\frac{x^2}{2}} \, e^{- \im x p_j} \,
\left(f(x+q_j) e^{-\frac{\im}{2}p_j q_j} - f(x+q_0) e^{-\frac{\im}{2}p_j q_0} \right) \, \di x\\
& = \underbrace{\frac{e^{-\frac{\im}{2} p_j q_j}}{\pi^{\frac14}}
\int_\R e^{-\frac{x^2}{2}} \, e^{-\im x p_j} \,  \big(f(x+q_j) - f(x+q_0)\big) \, \di x }_{\tI_{21}} +
\underbrace{\frac{e^{-\frac{\im}{2}p_j q_j} - e^{-\frac{\im}{2}p_j q_0}}{\pi^{\frac14}}
\int_\R e^{-\frac{x^2}{2}} \, e^{-\im x p_j} \, f(x+q_0) \, \di x }_{\tI_{22}}
\end{align*}
Next we have that
$$
\abs{\tI_{21}} \leq 
C \norm{ f(\cdot+q_j) - f(\cdot+q_0)}_{L^2(\R)} \,  
\stackrel{q_j \to q_0}{\to} 0
$$
by the continuity of the translations in $L^2(\R)$, whereas
$$
\abs{\tI_{22}} \leq C \abs{\int_\R e^{-\frac{x^2}{2}} \, e^{-\im x p_j} \, f(x+q_0) \, \di x } \stackrel{p_j \to \infty}{\to} 0
$$
by Riemann-Lebesgue lemma. 
In conclusion we have proved that 
$\int_\R f(x) \, \bar{\Phi_j(x)} \, \di x  \to 0$ as $j \to \infty$. This concludes the proof of case $(i)$.

In case $(ii)$ we can assume that, up to a subsequence, $q_j$ has always the same sign.  Let  $f \in L^2(\R)$. Then one easily shows that 
 \begin{align*}
\abs{ \int_\R f(x) \, \bar{\Phi_j(x)} \di x }
 \leq \pi^{-1/4} \int_\R \abs{f(x+q_j)} \, e^{-\frac{x^2}{2}} \di x \stackrel{|q_j| \to \infty}{\to}  0 
 \end{align*}
 concluding the proof of case $(ii)$.
\end{proof}

\footnotesize

\bibliographystyle{plain} 

\end{document}